\def\ps@pprintTitle{%
\let\@oddhead\@empty
\let\@evenhead\@empty
\def\@oddfoot{}%
\let\@evenfoot\@oddfoot}
\theoremstyle{plain}
\newtheorem{thm}{Theorem}[section]
\newtheorem{lem}{Lemma}[section]
\newtheorem{cor}{Corollary}[section]
\newtheorem{rem}{Remark}[section]
\newtheorem{prop}{Proposition}[section]
\newtheorem{Fact}{Fact}[section]
\newtheorem{Notation}{Notation}[section]
\newtheorem{Observation}{Observation}[section]
\numberwithin{equation}{section}
\begin{document}
\nocite{*}
\sloppy

\begin{frontmatter}

\title{Tournaments with maximal decomposability}
\author{Cherifa Ben Salha \fnref{}}
\address{University of Carthage, Faculty of Sciences of Bizerte, Bizerte, Tunisia}
\ead{cherifa.bensalha@fsb.u-carthage.tn}

\begin{abstract} 
Given a tournament $T$, a module of $T$ is a subset $M$ of $V(T)$ such that for $x, y\in M$ and $v\in V(T)\setminus M$, $(x,v)\in A(T)$ if and only if $(y,v)\in A(T)$. The trivial modules of $T$ are $\emptyset$, $\{u\}$ $(u\in V(T))$ and $V(T)$. The tournament $T$ is indecomposable if all its modules are trivial; otherwise it is decomposable. The decomposability index of $T$, denoted by $\delta(T)$, is the smallest number of arcs of $T$ that must be reversed to make $T$ indecomposable. In  a previous paper, we proved that for $n \geq 5$, we have  $\delta(n) = \left\lceil \frac{n+1}{4} \right\rceil$, where $\delta(n)$ is the maximum of $\delta(T)$ over the tournaments $T$ with $n$ vertices.   In this paper, we characterize the tournaments $T$ with $\delta$-maximal decomposability, i.e., such that $\delta(T)=\delta(\vert T\vert)$.   
\end{abstract}
\begin{keyword}
Module\sep co-module \sep indecomposable \sep co-modular decomposition \sep decomposability index  \sep co-modular index. 
\MSC[2020] 05C20 \sep  05C75. 
\end{keyword}
\end{frontmatter}

\section{Introduction}
Arc reversal problems in tournaments have been studied by several authors under  different considerations.
The general problem consists of transforming a tournament $T$ into a tournament satisfying a given property (P) by reversing a minimum number of arcs. 
 This number is the distance of $T$ to the set of the tournaments defined on the same vertex set as $T$, and satisfying Property (P). It can be considered as a measure of the negation of Property (P). 
 
 S.J. Kirkland \cite{K} was interested in this problem when Property (P) is reducibility. He established that every $n$-vertex tournament $T$ can be made reducible (i.e. non strongly connected) by reversing at most $\left\lfloor \frac{n-1}{2} \right\rfloor$ arcs, and that this bound is best possible. He also characterized the tournaments $T$ with maximal strong connectivity, i.e., the tournaments $T$ such that $s(T)= \left\lfloor \frac{v(T)-1}{2} \right\rfloor$, where $s(T)$ is the minimum number of arcs that must be reversed to make $T$ reducible. He proved that these tournaments are the regular and almost regular tournaments. 
 
When Property (P) is decomposability, the problem was introduced by V.~M\"{u}ller and J. Pelant \cite{MP}  because of its relation with the strongly homogeneous tournaments
 whose existence is equivalent to that of skew Hadamard matrices (e.g. see \cite{BR}). They introduce the indecomposability index of a tournament $T$ (that they call arrow-simplicity) as the smallest integer $i(T)$ for which there exist $i(T)$ arcs of $T$ whose reversal makes T decomposable. They
prove that $i(T) \leq \frac{1}{2}(v(T)  -1)$, and that the upper bound is uniquely reached by the strongly homogeneous tournaments. This gives a new characteristic property of strongly homogeneous tournaments: the strongly homogeneous tournaments are the tournaments with maximal indecomposability. 

In this paper, we consider the negation of the property considered by V. M\"{u}ller and J. Pelant \cite{MP}, i.e., indecomposability.
Given a tournament $T$ on at least 5 vertices, H. Belkhechine \cite{Index} introduced the decomposability index of $T$, denoted by $\delta(T)$, as the minimum number of arcs that must be reversed to make $T$ indecomposable, and he asked to characterize the tournaments $T$ with $\delta$-maximal decomposbility, i.e., the tournaments $T$ such that $\delta(T) = \delta(v(T))$, where for every integer $n \geq 5$, $\delta(n)$ is the maximum of $\delta(U)$ over the $n$-vertex tournaments $U$. In this paper, we characterize such tournaments (see Theorems~\ref{p4n}--\ref{p4n+3}).

\section{Preliminaries}

A {\it tournament} $T$ consists of a finite set $V(T)$ of {\it vertices} together with a set $A(T)$ of ordered pairs of distinct vertices, called {\it arcs}, such that for all $x \neq y \in V(T)$, $(x,y) \in A(T)$ if and only if $(y,x) \not\in A(T)$. Such a tournament is denoted by $(V(T), A(T))$. The {\it cardinality} of $T$, denoted by $v(T)$, is that of $V(T)$. Given a tournament $T$, with each subset $X$ of $V(T)$ is associated the {\it subtournament} $T[X] = (X, A(T) \cap (X \times X))$ of $T$ induced by $X$. Two tournaments $T$ and $T'$ are {\it isomorphic}, which is denoted by $T \simeq T'$, if there exists an {\it isomorphism} from $T$ onto $T'$, that is, a bijection $f$ from $V(T)$ onto $V(T')$ such that for every $x, y \in V(T)$, $(x, y) \in A(T)$ if and only if $(f(x), f(y)) \in A(T')$. With each tournament $T$ is associated its {\it dual} tournament $T^{\star}$ defined by $V(T^{\star})= V(T)$ and $A(T^{\star}) =\{(x,y) \colon\ (y,x)\in A(T)\}$. 

A {\it transitive} tournament is a tournament $T$ such that for every $x,y,z \in V(T)$, if $(x,y) \in A(T)$ and  $(y,z) \in A(T)$, then $(x,z) \in A(T)$. Let $n$ be a positive integer. We denote by $\underline{n}$ the transitive tournament whose vertex set is $\{0, \ldots, n-1\}$ and whose arcs are the ordered pairs $(i,j)$ such that $0 \leq i < j \leq n-1$. Up to isomorphism, $\underline{n}$ is the unique transitive tournament with $n$ vertices.

The paper is based on the following notions. Given a tournament $T$, a subset $M$ of $V(T)$ is a {\it module} \cite{Spinrad} (or a {\it clan} \cite{E} or an {\it interval} \cite{I}) of $T$ provided that for every $x,y \in M$ and for every $v \in V(T) \setminus M$, $(v,x) \in A(T)$ if and only if $(v,y) \in A(T)$. For example, $\emptyset$, $\{x\}$, where $x \in V(T)$, and $V(T)$ are modules of $T$, called {\it trivial} modules. 
A tournament is {\it indecomposable} \cite{I, ST} (or {\it prime} \cite{Spinrad} or {\it primitive} \cite{E}) if all its modules are trivial; otherwise it is {\it decomposable}. Let us consider the tournaments of small sizes.
The tournaments with at most two vertices are clearly indecomposable. The tournaments $\underline{3}$ and $C_{3} = (\{0,1,2\}, \{(0,1), (1,2), (2,0)\})$ are, up to isomorphism, the unique tournaments with three vertices. The tournament $C_{3}$ is indecomposable, whereas $\underline{3}$ is decomposable.
Up to isomorphism, there are exactly four tournaments with four vertices, all of then are decomposable. These tournaments are described in Fact~\ref{tournois4} (see  also Figure~\ref{t4}).  
Up to isomorphism, the indecomposable tournaments with five vertices are
$U_{5}=(\{0,\ldots,4\},A(\underline{3})\cup \{(3,0),(1,3),(2,3),(4,0),(4,1),(2,4),(3,4)\}$,
$V_{5}=(\{0,\ldots,4\}, (A(U_{5}) \setminus \{(3,4)\})\cup \{(4,3)\})$, and
 $W_{5}=(\{0,\ldots,4\},A(\underline{4})\cup\{(4,0),(4,2),(1,4),(3,4)\})$ (e.g. see \cite{BB}). The twelve tournament with five vertices are described in Fact~\ref{tournois5} (see also Figure~\ref{t5}). 
 Let $T$ and $T'$ be isomorphic tournaments. The tournament $T$ is indecomposable if and only if $T'$ is. 
Similarly, a tournament $T$ and its dual share the same modules. In particular, $T$ is indecomposable if and only if $T^{\star}$ is.

Let $T$ be a tournament. An {\it inversion} of an arc $a = (x,y) \in A(T)$ consists of replacing the arc $a$ by $a^{\star}$ in $A(T)$, where $a^{\star} = (y,x)$. The tournament obtained from $T$ after reversing the arc $a$ is denoted by ${\rm Inv}(T,a)$. Thus ${\rm Inv}(T,a) = (V(T), (A(T) \setminus \{a\}) \cup \{a^{\star}\})$. More generally, for $B \subseteq A(T)$, we denote by ${\rm Inv}(T, B)$ the tournament obtained from $T$ after reversing all the arcs of $B$, that is ${\rm Inv}(T, B)= (V(T), (A(T) \setminus B) \cup B^{\star})$, where $B^{\star} = \{b^{\star} \colon\ b \in B\}$. For example, $T^{\star}={\rm Inv}(T, A(T))$.
Given a tournament $T$ with at least five vertices,
the {\it decomposability index} of $T$, denoted by $\delta(T)$, has been introduced by  H.~Belkhechine \cite{Index} as the smallest integer $m$ for which there exists $B \subseteq A(T)$ such that $|B|=m$ and ${\rm Inv}(T, B)$ is indecomposable. The index $\delta(T)$ is well-defined because  for every integer $n \geq 5$, there exist indecomposable tournaments with $n$ vertices (e.g. see \cite{Index}). Notice that $\delta(T) = \delta(T^{\star})$.  
Similarly, isomorphic tournaments have the same decomposability index. For $n \geq 5$, let $\delta(n)$ be the maximum of $\delta(T)$ over the tournaments $T$ with $n$ vertices. H. Belkhechine and I \cite{Index2} have proved that
\begin{equation} \label{Q} 
\delta(n) = \left\lceil \frac{n+1}{4} \right\rceil \ \text{for every integer} \ n \geq 5 \ (\text{see Theorem}~\ref{deltan}). 
\end{equation}
In fact, (\ref{Q}) is \cite[Conjecture 6.2]{Index}.
Given a tournament $T$ with at least five vertices, let us say that $T$ has {\it $\delta$-maximal decomposability} when $\delta(T) = \delta(v(T))$, i.e., when $\delta(T) = \left\lceil \frac{v(T)+1}{4} \right\rceil$ (see (\ref{Q})). For example, the transitive tournaments with at least five vertices have $\delta$-maximal decomposability because $\delta(\underline{n}) = \delta(n)$ for every integer $n \geq 5$ (see \cite{Index}). But transitive tournaments are not the only tournaments with $\delta$-maximal decomposability. The aim of this paper is to characterize the tournaments with such maximality. This answers a problem posed by H. Belkhechine \cite{Index}. The notions of co-module and co-modular decomposition form important tools for this purpose.
These notions have been introduced in \cite{Index2} as follows.
Given a tournament $T$, a {\it co-module} of $T$ is a subset $M$ of $V(T)$ such that $M$ or $V(T)\setminus M$ is a nontrivial module of $T$.  A {\it co-modular decomposition} of $T$ is a set of pairwise disjoint co-modules of $T$.
Observe that a tournament $T$ is decomposable if and only if it admits a nonempty co-modular decomposition.
A {\it $\Delta$-decomposition} of $T$ is a co-modular decomposition $D$ of $T$ which is of maximum size.
 The {\it co-modular index} of a tournament $T$, denoted by $\Delta(T)$, is the size of a $\Delta$-decomposition of $T$.
  For instance, a tournament T is decomposable if and only if $ \Delta(T) \geq 2$.
  For every isomorphic tournaments $T$ and $T'$, we have $\Delta(T) = \Delta (T') = \Delta(T^{\star})$. 
 For $n \geq 3$, let $\Delta(n)$ be the maximum of $\Delta(T)$ over the tournaments $T$ with $n$ vertices. In \cite{Index2}, we have proved that 
 \begin{equation} \label{Q'} 
 \Delta(n) = \left\lceil \frac{n+1}{2} \right\rceil \ \text{for every integer} \ n \geq 3 \ (\text{see Theorem}~\ref{deltan}), 
 \end{equation}   
 and that the decomposability index and the co-modular one are directly related as follows:
 \begin{equation} \label{Q''} 
 \delta(T) = \left\lceil \frac{\Delta(T)}{2} \right\rceil \ (\text{see Theorem}~\ref{deltan}) 
 \end{equation}
 for every tournament $T$ with at least five vertices. This relation is certainly our most important result in \cite{Index2}, although our original purpose was to prove or disprove \cite[Conjecture 6.2]{Index}, i.e. (\ref{Q}). Indeed, (\ref{Q}) is a direct consequence of (\ref{Q'}) and (\ref{Q''}).
 
 It follows from (\ref{Q''}) that the tournaments $T$ with at least five vertices such that $\Delta(T) =\Delta(v(T))$ have $\delta$-maximal decomposability. This leads us to consider the tournaments having {\it $\Delta$-maximal decomposability}, i.e., the tournaments $T$ with at least three vertices such that $\Delta(T) =\Delta(v(T))$. The tournaments with $\Delta$-maximal or $\delta$-maximal decomposability will be described in terms of lexicographic sums of tournaments of small sizes. This notion is introduced in Section~\ref{lexi}. In Section~\ref{urmc}, we review some required results on co-modules and co-modular decompositions. We characterize  the tournament with $\Delta$-maximal decomposability in Section~\ref{Deltamax}, and those with $\delta$-maximal decomposability in Section~\ref{deltamax}.
 
 \section{Co-modules and co-modular decompositions}\label{urmc}

\begin{Notation} \normalfont 
	Given a co-modular decomposition $D$ of a tournament $T$, we denote by ${\rm sg}(D)$ the set of the singletons of $D$. Moreover, $D\setminus {\rm sg}(D)$ is denoted by $D_{\geq 2}$. 	
\end{Notation}
\begin{lem} [\cite{Index2}] \label{comod part}
 Given a decomposable tournament $T$, consider a co-modular decomposition $D$ of $T$. The following assertions are satisfied.
 \begin{enumerate}
 \item The tournament $T$ admits at most two singletons which are co-modules of $T$. In particular $|{\rm sg}(D)| \leq 2$. 
  \item If $D$ contains an element $M$ which is not a module of $T$, then the elements of $D \setminus \{M\}$ are nontrivial modules of $T$.
 \end{enumerate}
\end{lem}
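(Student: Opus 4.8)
The plan is to prove the two assertions separately, each by unwinding the definition of a co-module. For the first assertion, note that a singleton is never a \emph{nontrivial} module, so $\{x\}$ can be a co-module of $T$ only through its complement: $\{x\}$ is a co-module of $T$ if and only if $V(T)\setminus\{x\}$ is a module of $T$ (here $T$ decomposable forces $v(T)\geq 3$, so $V(T)\setminus\{x\}$ is automatically a nontrivial subset as soon as it is a module). One then translates ``$V(T)\setminus\{x\}$ is a module'' into ``$x$ beats every other vertex, or $x$ is beaten by every other vertex''. A one-line tournament argument shows there is at most one vertex of each kind, since two dominating, or two dominated, vertices would contradict the tournament axiom on the arc joining them. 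Hence $T$ has at most two singleton co-modules, and since every element of ${\rm sg}(D)$ is such a singleton, $|{\rm sg}(D)|\leq 2$.

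For the second assertion, let $M\in D$ not be a module; since $M$ is a co-module, $W:=V(T)\setminus M$ is a nontrivial module, and $M\neq\emptyset$. Fix $N\in D\setminus\{M\}$. As $D$ is a co-modular decomposition, $N\cap M=\emptyset$, so $N\subseteq W$ and $N\neq V(T)$; also $N\neq\emptyset$ because co-modules are nonempty. I would first rule out the case $N=\{x\}$ a singleton: by the analysis of the first assertion, $x$ then dominates, or is dominated by, all other vertices; since $x\in W$ and $W$ is a module, this forces every vertex of $M$ to have the same relation to every vertex of $W$, whence $M$ is a module --- contradicting the choice of $M$. Thus $N$ is neither empty, nor a singleton, nor $V(T)$, and it is enough to show that $N$ is a module, for then $N$ is a nontrivial module.

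Being a co-module, $N$ is itself a (necessarily nontrivial) module --- in which case we are done --- or else $W':=V(T)\setminus N$ is a nontrivial module. In the second case $M\subseteq W'$, so I fix $m\in M$ and verify that $N$ is a module directly: for $u,v\in N$ and $c\notin N$, if $c\in M$ the claim follows from $W$ being a module (as $u,v\in W$ and $c\notin W$); otherwise $c\in W\setminus N\subseteq W'$, and I chain three equivalences --- the arc between $u$ and $c$ is oriented as the arc between $u$ and $m$ (by $W'$, since $c,m\in W'$ and $u\notin W'$); the arc between $m$ and $u$ is oriented as the arc between $m$ and $v$ (by $W$, since $u,v\in W$ and $m\notin W$); and the analogous step for $v$ via $W'$ --- to get $(c,u)\in A(T)$ if and only if $(c,v)\in A(T)$. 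I expect this last case to be the only genuine obstacle: the set-difference of two modules is not a module in general, so the proof cannot invoke a generic modular identity and must instead exploit the precise interlocking of $M,N,W,W'$, namely $N=W\setminus W'$ and $\emptyset\neq M=V(T)\setminus W\subseteq W'$, together with the non-emptiness of $M$ (needed to choose $m$).
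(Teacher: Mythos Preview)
Your argument is correct. Both assertions are proved cleanly: for Assertion~1 you correctly identify that a singleton $\{x\}$ is a co-module precisely when $x$ is a source or a sink of $T$, and the tournament axiom forbids two sources or two sinks; for Assertion~2 your case analysis is sound, including the chaining through the auxiliary vertex $m\in M$ when $c\in W\cap W'$, which is indeed the only nontrivial step. One small remark: in the singleton subcase you implicitly use that $M$, being ``not a module'', must have $|M|\geq 2$ (since singletons are always modules), so that the derived modularity of $M$ really is a contradiction --- this is true but worth making explicit.

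As for comparison with the paper: the present paper does not prove this lemma at all; it is quoted from \cite{Index2} and stated without proof. Your self-contained argument therefore supplies what the paper omits, and there is nothing in the paper's text to compare it against.
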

The following corollary is a direct consequence of Lemma \ref{comod part}.

\begin{cor}\label{rcomod}
 Given a decomposable tournament $T$, consider a co-modular decomposition $D$ of $T$.
 We have $\vert {\rm sg}(D)\vert \leq 2$. Moreover, if ${\rm sg}(D) \neq \varnothing$, then the elements of $D_{\geq 2}$ are nontrivial modules of $T$.
\end{cor}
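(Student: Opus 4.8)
The plan is to read everything off Lemma~\ref{comod part}. The inequality $|{\rm sg}(D)| \le 2$ is exactly the last sentence of Lemma~\ref{comod part}(1), so that part requires no further work.

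For the ``moreover'' assertion I would argue as follows. Assume ${\rm sg}(D) \ne \varnothing$ and fix a singleton $\{x\} \in {\rm sg}(D)$. Pick an arbitrary $N \in D_{\ge 2}$; the goal is to show that $N$ is a nontrivial module of $T$. First note that, as an element of a co-modular decomposition, $N$ is a co-module, and since neither $\varnothing$ nor $V(T)$ is a co-module, we have $2 \le |N| \le v(T)-1$; hence it suffices to show that $N$ is a module of $T$. Suppose not. Since the members of $D$ are pairwise disjoint and $\{x\} \cap N = \varnothing$, we have $\{x\} \ne N$, i.e. $\{x\} \in D \setminus \{N\}$. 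Now apply Lemma~\ref{comod part}(2) with $M := N$: every element of $D \setminus \{N\}$ is then a nontrivial module of $T$, so in particular $\{x\}$ is a nontrivial module of $T$ --- absurd, because a one-element set is a trivial module. This contradiction forces $N$ to be a module, hence a nontrivial one, and since $N \in D_{\ge 2}$ was arbitrary, we are done.

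I do not expect any genuine obstacle here: the corollary is essentially a repackaging of the lemma. The one point to handle with a little care is the use of the pairwise disjointness of $D$ to conclude $\{x\} \in D \setminus \{N\}$ (which is precisely what lets us feed $\{x\}$ into Lemma~\ref{comod part}(2) to reach the contradiction), together with the elementary observation that no member of a co-modular decomposition can equal $\varnothing$ or $V(T)$, so that ``module'' and ``nontrivial module'' coincide for the elements of $D_{\ge 2}$.
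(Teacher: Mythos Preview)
Your proof is correct and is exactly the ``direct consequence'' the paper alludes to: both the bound $|{\rm sg}(D)|\le 2$ and the module assertion are read straight off the two items of Lemma~\ref{comod part}, with the singleton $\{x\}$ playing the role of the forbidden nontrivial module in part~(2). There is nothing to add.
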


Let $T$ be a tournament. A {\it minimal co-module} of $T$ is a co-module $M$ of $T$ which is minimal in the set of co-modules of $T$ ordered by inclusion, i.e., such that $M$ does not contain any other co-module of $T$. Similarly, a {\it minimal nontrivial module} of a tournament $T$ is a nontrivial module of $T$ which is minimal in the set of nontrivial modules of $T$ ordered by inclusion.

\begin{Notation} \normalfont
  Given a tournament $T$, the set of minimal co-modules of $T$ is denoted by $\text{mc}(T)$.
\end{Notation}
A {\it minimal $\Delta$-decomposition} (or a  {\it $\delta$-decomposition}) \cite{Index2} of a tournament $T$ is a $\Delta$-decomposition $D$ of $T$ in which every element is a minimal co-module of $T$, i.e., for every $M\in D$, $M\in \text{mc}(T)$.
For example, for every integer $n \geq 3$, we have $\text{mc}(\underline{n}) = \{\{0\}, \{n-1\}\} \cup \{\{i,i+1\}: 1 \leq i \leq n-3\}$. Moreover, $\{\{0\}, \{n-1\}\} \cup \{\{i,i+1\}: 1 \leq i \leq n-3 \ \text{and} \ i \ \text{is odd}\}$ is a $\delta$-decomposition of $\underline{n}$.

\begin{rem}[\cite{Index2}] \label{fact1 min mod comod} \normalfont
Given a nontrivial module $M$ of a tournament $T$, the following assertions are satisfied.
\begin{enumerate}
\item If $M$ is a minimal co-module of $T$, then $M$ is a minimal nontrivial module of $T$. \item If $M$ is a minimal nontrivial module of a tournament $T$, then $T[M]$ is indecomposable.
\end{enumerate}
\end{rem}

\begin{Observation}  \label{rr2}
	Let $T$ be a tournament and let $M \in {\rm mc}(T)$. The following assertions are satisfied.
	\begin{enumerate}
		\item If $\vert M\vert =3$, then $T[M]\simeq C_{3}$.
		\item If $M$ is a nontrivial module of $T$, then $\vert M\vert \neq 4$.
		\item If $M$ is a nontrivial module of $T$ and $\vert M \vert =5$, then $T[M]\simeq U_{5}$, $V_{5}$ or $W_{5}$.
	\end{enumerate}
\end{Observation}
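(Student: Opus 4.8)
The plan is to dispatch assertions (2) and (3), and part of (1), through Remark~\ref{fact1 min mod comod} together with the classification of tournaments on at most five vertices, and then to treat the remaining case of (1) by a separate ``descent'' argument. Indeed, if $M\in{\rm mc}(T)$ happens to be a nontrivial module of $T$, then Remark~\ref{fact1 min mod comod}(1) makes $M$ a minimal nontrivial module of $T$, whence Remark~\ref{fact1 min mod comod}(2) says $T[M]$ is indecomposable. Since there is no indecomposable tournament on four vertices, this already yields (2); since $U_5$, $V_5$, $W_5$ are, up to isomorphism, the only indecomposable tournaments on five vertices, it yields (3); and since $C_3$ is the unique indecomposable tournament on three vertices, it yields $T[M]\simeq C_3$, which settles (1) in the case where $M$ is a module of $T$.

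The crux is therefore (1) in the case where $M$ is \emph{not} a module of $T$; then $N:=V(T)\setminus M$ is a nontrivial module of $T$, so $|N|\ge 2$, and I would show this case is impossible. Fix a vertex $v\in N$ and consider the four-vertex tournament $T':=T[M\cup\{v\}]$. Being on four vertices, $T'$ is decomposable, hence has a nontrivial module $P$ with $2\le |P|\le 3$. The aim is to turn $P$ into a co-module of $T$ that is strictly contained in $M$, contradicting the minimality of $M$ in ${\rm mc}(T)$. I would split on whether $v\in P$.

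If $v\notin P$, then $P\subseteq M$, and one checks that $P$ is actually a module of $T$: for $w$ outside $P$ the required equivalence $w\to x\Leftrightarrow w\to y$ is immediate when $w\in(M\cup\{v\})\setminus P$ (as $P$ is a module of $T'$), and when $w\in N$ it follows from $N$ being a module of $T$ (so $w$ relates to each of $x,y$ as $v$ does) combined with $v$ relating uniformly to $P$. Since $2\le|P|\le 3<|V(T)|$, the set $P$ is a nontrivial module, hence a co-module of $T$; and $P\ne M$ because $M$ is not a module, so $P\subsetneq M$ --- contradiction. If instead $v\in P$, put $Q:=P\setminus\{v\}\subseteq M$ and $M':=M\setminus Q$, so $1\le|M'|\le 2$ and $M'\subsetneq M$; an analogous verification (again using that $N$, not just $M$, is a module and that $v$ encodes the behaviour of all of $N$) shows that $V(T)\setminus M'=Q\cup N$ is a module of $T$, which is nontrivial since $|Q\cup N|\ge 3$ and $M'\ne\emptyset$. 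So $M'$ is a co-module of $T$ strictly inside $M$ --- again a contradiction. Hence the case $M$ not a module of $T$ does not occur, and (1) follows.

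I expect the only delicate point to be the two module verifications in the last step: one must make sure that a nontrivial module of the auxiliary four-vertex tournament $T'$ genuinely descends to a co-module of $T$ lying \emph{inside} $M$, which is exactly why the argument has to split on whether $v\in P$ and why it is essential that $N$ rather than $M$ be a module, so that the single representative vertex $v$ faithfully records how all of $N$ interacts with $M$. Everything else reduces to the already-quoted facts that no four-vertex tournament is indecomposable and that $C_3$, respectively $U_5,V_5,W_5$, are the only indecomposable tournaments on three, respectively five, vertices.
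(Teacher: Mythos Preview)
Your treatment of assertions~(2) and~(3), and of assertion~(1) when $M$ is a nontrivial module of $T$, is exactly what the paper does: invoke Remark~\ref{fact1 min mod comod} to get that $T[M]$ is indecomposable and then appeal to the classification of indecomposable tournaments on three, four, and five vertices.

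Where you diverge is in the remaining subcase of~(1), namely $|M|=3$ with $M$ not a module of $T$. The paper does \emph{not} rule this subcase out; it only assumes for contradiction that $T[M]\simeq\underline{3}$ and, by a direct inspection of the possible adjacencies between the module $\overline{M}$ and the three vertices $0,1,2$ of $\underline{3}$, locates a co-module strictly inside $M$ (one of $\{0,1\}$, $\{1,2\}$, $\{1,2,\ldots\}=V(T)\setminus\{0\}$, or $V(T)\setminus\{2\}$). You instead prove the \emph{stronger} statement that this subcase is empty: picking $v\in N=\overline{M}$, you exploit the decomposability of the $4$-vertex tournament $T'=T[M\cup\{v\}]$ and show that any nontrivial module $P$ of $T'$ descends to a co-module of $T$ strictly inside $M$, regardless of whether $T[M]$ is $C_3$ or $\underline{3}$. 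Your two module verifications (for $P$ when $v\notin P$, and for $Q\cup N$ when $v\in P$) are correct; the key point, which you identify, is that $N$ being a module lets the single vertex $v$ stand in for all of $N$. So your argument is sound and in fact yields a sharpening of the observation: a minimal co-module of size~$3$ is always itself a module. The paper's case-check is shorter to write down but less informative; your descent argument is more conceptual, avoids the adjacency case analysis, and would generalise if one ever needed the analogous statement for other sizes.
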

\begin{proof}
	For the first assertion, suppose $|M|=3$. If $M$ is a module of $T$, then by Remark~\ref{fact1 min mod comod}, $T[M]\simeq C_{3}$ because $C_3$ is, up to isomorphism, the unique indecomposable $3$-vertex tournament. Now, suppose that $M$ is not a module of $T$. Suppose for a contradiction that $T[M] \not\simeq C_3$. Up to isomorphism, we may assume $T[M]= \underline{3}$. Recall that $\overline{M}$ is a module of $T$ because $M$ is not. By examining the different possible adjacency relations between $\overline{M}$ and the vertices of $M$, it is easily seen that $\{0,1\}$, $\{1,2\}$, $V(T) \setminus \{0\}$, or $V(T) \setminus \{2\}$ is a nontrivial module of $T$, which contradicts the minimality of the co-module $M$ of $T$. 
	
	The second assertion follows from Remark~\ref{fact1 min mod comod} and from the fact that de $4$-vertex tournaments are decomposable. The third one follows from Remark~\ref{fact1 min mod comod} and from the fact that the indecomposable tournaments with five vertices are, up to isomophism, $U_5$, $V_5$ and $W_5$.  
	\end{proof}

The following theorem summarizes the main results of \cite{Index2}.

\begin{thm} [\cite{Index2}] \label{deltan} 
	For every tournament $T$ with $v(T)\geq 5$, we have $\delta(T) = \left\lceil \frac{\Delta(T)}{2} \right\rceil$. Moreover, for every integer $n \geq 3$, we have $\Delta(n) = \left\lceil \frac{n+1}{2} \right\rceil$. In particular, for every integer $n \geq 5$, we have $\delta(n) = \delta(\underline{n})= \left\lceil \frac{\Delta(n)}{2} \right\rceil = \left\lceil \frac{n+1}{4} \right\rceil$.
\end{thm}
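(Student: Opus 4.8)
The plan is to establish the three assertions in the natural order: first the universal identity $\delta(T) = \lceil \Delta(T)/2 \rceil$, then the maximum value $\Delta(n) = \lceil (n+1)/2 \rceil$, and finally deduce the statement about $\delta(n)$ by combining these with the transitive example. For the identity $\delta(T) = \lceil \Delta(T)/2 \rceil$, the key observation is that reversing a single arc $(x,y)$ can "kill" at most two co-modules of $T$ that are disjoint from each other: if $D$ is a co-modular decomposition and $B \subseteq A(T)$ is such that $\mathrm{Inv}(T,B)$ is indecomposable, then every $M \in D$ must cease to be a co-module, which forces some arc of $B$ to have exactly one endpoint in $M$ (or otherwise interact with $M$); a counting argument over the pairwise-disjoint members of $D$ then gives $|B| \geq \lceil |D|/2 \rceil$, hence $\delta(T) \geq \lceil \Delta(T)/2 \rceil$. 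For the reverse inequality, I would start from a $\delta$-decomposition (minimal $\Delta$-decomposition) $D$ of $T$, which by Observation~\ref{rr2} and Remark~\ref{fact1 min mod comod} consists of co-modules that are either singletons, $3$-element sets inducing $C_3$, or $5$-element sets inducing $U_5$, $V_5$, or $W_5$; pairing up the members of $D$ and choosing one or two well-placed arcs per pair (an arc crossing the boundary between the two paired co-modules, chosen so as to destroy both simultaneously), one builds an explicit $B$ with $|B| = \lceil |D|/2 \rceil$ making $T$ indecomposable. This direction needs care: one must check that the local reversals genuinely destroy \emph{all} modules, not merely the chosen ones, which is where the structure of minimal co-modules (each small and indecomposable, by Remark~\ref{fact1 min mod comod}) does the work.

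For the value $\Delta(n) = \lceil (n+1)/2 \rceil$, the upper bound $\Delta(T) \leq \lceil (n+1)/2 \rceil$ follows from a packing argument: in any co-modular decomposition $D$ of an $n$-vertex tournament, by Corollary~\ref{rcomod} at most two members are singletons, and every non-singleton co-module has at least $2$ vertices but—crucially—cannot have exactly $4$ vertices when it is a module and must induce $C_3$ when it is a minimal co-module of size $3$; passing to a minimal $\Delta$-decomposition, the members are pairwise disjoint subsets of $V(T)$ with sizes in $\{1,3,5\}$ and at most two of size $1$, so $|D| \cdot (\text{average size}) \leq n$ with the extremal configuration being two singletons plus $(n-2)/2$ triples (roughly), giving $|D| \leq \lceil (n+1)/2 \rceil$. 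A short case analysis on $n \bmod 2$ (and on whether the two singletons are present) pins down the constant. The matching lower bound is supplied by the transitive tournament: the explicit computation $\mathrm{mc}(\underline{n}) = \{\{0\},\{n-1\}\} \cup \{\{i,i+1\} : 1 \leq i \leq n-3\}$ already recorded in the text shows that $\{\{0\},\{n-1\}\} \cup \{\{i,i+1\} : 1 \leq i \leq n-3,\ i \text{ odd}\}$ is a co-modular decomposition of size exactly $\lceil (n+1)/2 \rceil$, so $\Delta(\underline n) \geq \lceil (n+1)/2 \rceil$, and hence $\Delta(n) = \lceil (n+1)/2 \rceil$ with the transitive tournament attaining it.

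Finally, the "in particular" clause is immediate: for $n \geq 5$, applying the identity to $\underline n$ gives $\delta(\underline n) = \lceil \Delta(\underline n)/2 \rceil = \lceil \Delta(n)/2 \rceil$, and since $\Delta(T) \leq \Delta(n)$ for every $n$-vertex $T$ and $x \mapsto \lceil x/2 \rceil$ is monotone, $\delta(T) = \lceil \Delta(T)/2 \rceil \leq \lceil \Delta(n)/2 \rceil = \delta(\underline n)$, so $\delta(n) = \delta(\underline n) = \lceil \Delta(n)/2 \rceil = \lceil (n+1)/4 \rceil$ (using $\lceil \lceil (n+1)/2 \rceil / 2 \rceil = \lceil (n+1)/4 \rceil$, a routine identity for integer ceilings). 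The main obstacle I anticipate is the lower bound direction of the identity $\delta(T) \geq \lceil \Delta(T)/2 \rceil$: making precise the claim that one arc reversal can neutralize at most two disjoint co-modules requires a careful case analysis distinguishing whether a co-module is a module or the complement of one, and handling the interaction of a reversed arc with the "boundary" of each co-module; this is the technical heart and where the bulk of \cite{Index2} presumably lies.
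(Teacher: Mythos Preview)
The paper does not contain a proof of Theorem~\ref{deltan}: it is stated as a summary of the main results of \cite{Index2} and used as a black box throughout. So there is no ``paper's own proof'' to compare against; your sketch is effectively a reconstruction of what \cite{Index2} might do.

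That said, your sketch contains a concrete error that would derail both the upper bound $\delta(T)\le\lceil\Delta(T)/2\rceil$ and the packing argument for $\Delta(n)$. You assert that the members of a minimal $\Delta$-decomposition have sizes in $\{1,3,5\}$, citing Observation~\ref{rr2} and Remark~\ref{fact1 min mod comod}. This is false: Observation~\ref{rr2} only rules out size~$4$ for nontrivial modules and describes what sizes $3$ and $5$ look like; it says nothing against size~$2$. Twins (modules of cardinality~$2$) are the generic minimal co-modules, and indeed the extremal tournaments in Propositions~\ref{thmDelta0} and~\ref{thmDelta1} have $\Delta$-decompositions consisting mostly of twins. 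Your packing bound for $\Delta(n)$ survives this correction (non-singleton members have size $\ge 2$, and with at most two singletons one gets $2(|D|-2)+2\le n$, i.e.\ $|D|\le\lceil(n+1)/2\rceil$), but your construction of the arc set $B$ for the inequality $\delta(T)\le\lceil\Delta(T)/2\rceil$ is built on the wrong list of possible block shapes and would need to be redone.

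You also have the difficulty backwards. The inequality $\delta(T)\ge\lceil\Delta(T)/2\rceil$ is the easy direction: if $M$ is a nontrivial module and ${\rm Inv}(T,B)$ is indecomposable, then $B$ must contain an arc with exactly one endpoint in $M$ (arcs internal to $M$ or to $\overline{M}$ leave the module intact), and since the members of a $\Delta$-decomposition are pairwise disjoint, each arc of $B$ meets at most two of them; a short adjustment handles the at most one member of $D$ that is a co-module without being a module. The genuinely hard direction is $\delta(T)\le\lceil\Delta(T)/2\rceil$: one must exhibit $\lceil\Delta(T)/2\rceil$ reversals after which \emph{no} nontrivial module remains, not merely the modules coming from $D$. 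Your parenthetical ``one must check that the local reversals genuinely destroy all modules'' is exactly the heart of the matter, and it is not handled by the small-indecomposable structure of the blocks alone; this is where the real work of \cite{Index2} lies.
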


\begin{cor} \label{cor}
	Let $T$ be a decomposable tournament with $n$ vertices, where $n\geq 5$. Since $\delta(T) = \left\lceil \frac{\Delta(T)}{2} \right\rceil$, $\delta(n) = \left\lceil \frac{n+1}{4} \right\rceil$ and $\Delta(n)=\left\lceil \frac{n+1}{2} \right\rceil$ (see Theorem~\ref{deltan}), the following assertions are satisfied.
	\begin{enumerate}
		\item If $n\equiv 0 \text{ or } 1 \mod 4$, then $\delta(T)=\delta(n)$ if and only if $\Delta(T)=\Delta(n)$.
		\item If $n\equiv 2\text{ or } 3 \mod 4$, then $\delta(T)=\delta(n)$ if and only if $\Delta(T)=\Delta(n)$ or $\Delta(T)=\Delta(n)-1$.
	\end{enumerate}
\end{cor}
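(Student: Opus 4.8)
The plan is to translate the statement entirely into arithmetic of the ceiling function, since by Theorem~\ref{deltan} everything is governed by $\Delta(T)$ and $n$. The three displayed equalities in the hypothesis already give $\delta(T)=\lceil\Delta(T)/2\rceil$, $\delta(n)=\lceil(n+1)/4\rceil$ and $\Delta(n)=\lceil(n+1)/2\rceil$; the one extra ingredient I would record is the standard nesting identity $\lceil\lceil m/2\rceil/2\rceil=\lceil m/4\rceil$, valid for every integer $m$, which applied with $m=n+1$ yields $\delta(n)=\lceil\Delta(n)/2\rceil$. I would also note the inequality $\Delta(T)\le\Delta(n)$, immediate from the definition of $\Delta(n)$ as the maximum of $\Delta$ over the $n$-vertex tournaments, $T$ being one of them. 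With these two observations, $\delta(T)=\delta(n)$ becomes \emph{equivalent} to $\lceil\Delta(T)/2\rceil=\lceil\Delta(n)/2\rceil$, and the whole corollary reduces to comparing these two ceilings under the constraint $\Delta(T)\le\Delta(n)$.

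Next I would split on the parity of $\Delta(n)$. If $\Delta(n)$ is even, then $\lceil\Delta(n)/2\rceil=\Delta(n)/2$, and for an integer $k\le\Delta(n)$ one has $\lceil k/2\rceil=\Delta(n)/2$ exactly when $k\in\{\Delta(n)-1,\Delta(n)\}$; hence $\delta(T)=\delta(n)$ iff $\Delta(T)\in\{\Delta(n)-1,\Delta(n)\}$. If $\Delta(n)$ is odd, then $\lceil\Delta(n)/2\rceil=(\Delta(n)+1)/2$, and for an integer $k\le\Delta(n)$ the equality $\lceil k/2\rceil=(\Delta(n)+1)/2$ forces $k=\Delta(n)$; hence $\delta(T)=\delta(n)$ iff $\Delta(T)=\Delta(n)$.

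It then remains to read off the parity of $\Delta(n)=\lceil(n+1)/2\rceil$ from $n\bmod 4$: writing $n=4m+r$, a direct check gives $\Delta(n)=2m+1$ (odd) when $r\in\{0,1\}$ and $\Delta(n)=2m+2$ (even) when $r\in\{2,3\}$. Feeding this into the previous paragraph yields assertion~1 in the cases $n\equiv 0,1\pmod 4$ and assertion~2 in the cases $n\equiv 2,3\pmod 4$, which is exactly the claim.

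As for difficulty, there is essentially no obstacle here: the corollary is a bookkeeping consequence of Theorem~\ref{deltan}. The only points needing a little care are (i) justifying $\delta(n)=\lceil\Delta(n)/2\rceil$ through the ceiling-nesting identity rather than circularly via $\delta(\underline n)$, and (ii) the \emph{tightness} of the odd case, i.e.\ verifying that $\Delta(T)=\Delta(n)-1$ really does fail to give $\delta(T)=\delta(n)$ when $\Delta(n)$ is odd; both are one-line checks. One may also remark that $\Delta(T)\ge 2$ since $T$ is decomposable and $\Delta(n)\ge 3$ since $n\ge 5$, so the value $\Delta(n)-1$ occurring in assertion~2 is always at least $2$ and no degenerate situation arises.
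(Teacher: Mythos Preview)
Your proposal is correct and follows essentially the same approach as the paper: the paper treats the corollary as immediate from the three formulas of Theorem~\ref{deltan} (the justification is embedded in the statement itself, with no separate proof), and you have simply spelled out the underlying ceiling arithmetic and the parity split on $\Delta(n)$ that the paper leaves to the reader.
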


\section{Lexicographic sums of tournaments}\label{lexi}
Given a tournament $T$, consider a family  $(T_{i})_{i\in V(T)}$  of tournaments whose vertex sets $V_{i}= V (T_{i})$ are pairwise disjoint. The {\it lexicographic sum}   of the tournaments $T_{i}$ over the tournament $T$, denoted by $\Sigma _{i\in T} T_{i}$,  is the tournament defined on $V(\Sigma _{i\in T} T_{i})=\displaystyle\bigcup _{i\in V(T)}  V_{i}$ as follows.
Given $x\in V(T_{i})$ and $y\in V(T_{j})$, where $i,j\in V(T)$,
\begin{center}
	$(x,y)\in A(\Sigma _{i\in T} T_{i}) $ if $\begin{cases} i=j \text{ and } (x,y)\in A(T_{i}), \\
	i\neq j \text{ and } (i,j)\in A(T). \end{cases}$
\end{center}
Observe that for every $i \in V(T)$, $V(T_i)$ is a module of the lexicographic sum $\Sigma _{i\in T} T_{i}$.
When $V(T)=\{0,\ldots,n-1\}$, $\Sigma _{i\in T} T_{i}$ is also noted by $T(T_{0},...,T_{n-1})$. \index{$T(T_{0},...,T_{n-1})$} 
If the vertex sets $V (T_{i})$ are
not pairwise disjoint, then we can define the lexicographic sum $\Sigma _{i\in T} T_{i}$ up to
isomorphism by considering a family $(T'_{i})_{i\in V(T)}$ of tournaments defined on pairwise
disjoint vertex sets, and such that $T'_{i}\simeq T_{i}$ for every $i\in V(T)$. Thus, up to
isomorphism, $\Sigma _{i\in T} T_{i}$ is $\Sigma _{i\in T} T'_{i}$.  When the tournaments $T_{i}$ are isomorphic to the same tournament, say $H$, then  $\Sigma _{i\in T} T_{i}$ is called the {\it lexicographic product}  of $H$ by $T$; it is denoted by $T.H$.

We will see that the minimal nontrivial modules of the tournaments with $\Delta$-maximal (resp. $\delta$-maximal) decomposability have cardinalities $2$ or $3$ (resp. $2$, $3$, or $5$), and that most of them have cardinality $2$. These tournaments are then described in terms of lexicographic sums of tournaments which are principally $2$-tournaments. The vertex sets of these $2$-tournaments are then modules of cardinality $2$. A module of cardinality $2$ is called a {\it twin}.

The following facts describe the tournaments with four or five vertices by using lexicographic sums.

\begin{Fact} \label{tournois4}
	Up to isomorphism, the $4$-vertex tournaments are $\underline{4}$, $\underline{2}(\underline{1}, C_3)$, $\underline{2}(C_3, \underline{1})$ and $C_3(\underline{1}, \underline{1}, \underline{2})$ (see Figure~\ref{t4}). 

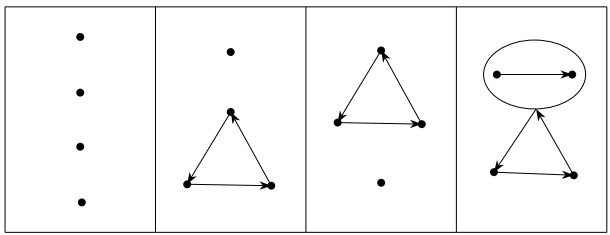
\begin{figure}[h]
	\begin{center}
		\psset{xunit=1cm,yunit=1cm,algebraic=true,dotstyle=o,dotsize=3pt 0,linewidth=0.1pt,arrowsize=3pt 2,arrowinset=0.3}
		\begin{pspicture*}(-1.12,-1.16)(7.12,2.16)
		\pspolygon[linecolor=white,fillcolor=white,fillstyle=solid,opacity=0.1](1.42,-0.36)(2.54,-0.38)(2,0.6)
		\pspolygon[linecolor=white,fillcolor=white,fillstyle=solid,opacity=0.1](3.42,0.46)(4.54,0.44)(4,1.42)
		\psline(-1,-1)(-1,2)
		\psline[linecolor=white](1.42,-0.36)(2.54,-0.38)
		\psline[linecolor=white](2.54,-0.38)(2,0.6)
		\psline[linecolor=white](2,0.6)(1.42,-0.36)
		\psline{->}(1.42,-0.36)(2.54,-0.38)
		\psline{->}(2.54,-0.38)(2,0.6)
		\psline{->}(2,0.6)(1.42,-0.36)
		\psline[linecolor=white](3.42,0.46)(4.54,0.44)
		\psline[linecolor=white](4.54,0.44)(4,1.42)
		\psline[linecolor=white](4,1.42)(3.42,0.46)
		\psline{->}(3.42,0.46)(4.54,0.44)
		\psline{->}(4.54,0.44)(4,1.42)
		\psline{->}(4,1.42)(3.42,0.46)
		\rput{0}(6.04,1.1){\psellipse(0,0)(0.68,0.46)}
		\psline{->}(5.54,1.1)(6.54,1.1)
		\psline{->}(5.5,-0.2)(6.56,-0.24)
		\psline{->}(6.06,0.64)(5.5,-0.2)
		\psline{->}(6.56,-0.24)(6.06,0.64)
		\psline(7,-1)(7,2)
		\psline(7,2)(-1,2)
		\psline(-1,-1)(7,-1)
		\psline(1,2)(1,-1)
		\psline(3,-1)(3,2)
		\psline(5,2)(5,-1)
		\begin{scriptsize}
		\psdots[dotstyle=*](0,1.6)
		\psdots[dotstyle=*](0,0.86)
		\psdots[dotstyle=*](0,0.14)
		\psdots[dotstyle=*](0.02,-0.6)
		\psdots[dotstyle=*](1.42,-0.36)
		\psdots[dotstyle=*](2.54,-0.38)
		\psdots[dotstyle=*](2,0.6)
		\psdots[dotstyle=*](2,1.4)
		\psdots[dotstyle=*](4,-0.34)
		\psdots[dotstyle=*](3.42,0.46)
		\psdots[dotstyle=*](4.54,0.44)
		\psdots[dotstyle=*](4,1.42)
		\psdots[dotstyle=*](5.54,1.1)
		\psdots[dotstyle=*](6.54,1.1)
		\psdots[dotstyle=*](5.5,-0.2)
		\psdots[dotstyle=*](6.56,-0.24)
		\end{scriptsize}
		\end{pspicture*}
		\caption{The 4-vertex tournaments (missing arcs are oriented from higher to lower).}\label{t4} 
	\end{center}
\end{figure}
\end{Fact}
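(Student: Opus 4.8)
The plan is to classify the $4$-vertex tournaments by peeling off a vertex of extreme out-degree, thereby reducing to the two known $3$-vertex tournaments $\underline{3}$ and $C_{3}$, and then to recognize each resulting tournament among the four claimed lexicographic sums. Recall first that in any $4$-vertex tournament $T$ the out-degrees sum to $\binom{4}{2}=6$ and each lies in $\{0,1,2,3\}$; hence either $T$ has a vertex of out-degree $3$, or $T$ has a vertex of out-degree $0$, or every out-degree lies in $\{1,2\}$, in which case exactly two of them equal $1$ and two equal $2$ (since $k\cdot 1+(4-k)\cdot 2=6$ forces $k=2$).

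I would first dispose of the two cases with an extreme vertex. If some $v\in V(T)$ beats every other vertex, then $V(T)\setminus\{v\}$ is a nontrivial module of $T$, so $T\simeq\underline{2}\bigl(\underline{1},\,T[V(T)\setminus\{v\}]\bigr)$; as $T[V(T)\setminus\{v\}]$ is $\underline{3}$ or $C_{3}$, this yields $T\simeq\underline{2}(\underline{1},\underline{3})\simeq\underline{4}$ or $T\simeq\underline{2}(\underline{1},C_{3})$. Symmetrically, if some $v$ loses to every other vertex then $V(T)\setminus\{v\}$ is again a nontrivial module and $T\simeq\underline{2}\bigl(T[V(T)\setminus\{v\}],\underline{1}\bigr)$, giving $T\simeq\underline{2}(\underline{3},\underline{1})\simeq\underline{4}$ or $T\simeq\underline{2}(C_{3},\underline{1})$.

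It remains to handle the case where the out-degree sequence is $(1,1,2,2)$. Writing $a,b$ for the two vertices of out-degree $1$ and $c,d$ for the two of out-degree $2$, I would pin down all six arcs by an elementary chase: assuming $(a,b)\in A(T)$ without loss of generality forces $(c,a),(d,a)\in A(T)$ (the unique out-neighbour of $a$ is $b$); then $b$, already dominated by $a$, beats exactly one of $c,d$, say $(b,c),(d,b)\in A(T)$ after possibly swapping $c$ and $d$; and counting out-degrees at $c$ (which already beats $a$ and is beaten by $b$) forces $(c,d)\in A(T)$. This determines $T$ up to isomorphism, and a routine check identifies it with $C_{3}(\underline{1},\underline{1},\underline{2})$ (whose $\underline{2}$-factor is a twin). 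Finally, the four tournaments $\underline{4}$, $\underline{2}(\underline{1},C_{3})$, $\underline{2}(C_{3},\underline{1})$, $C_{3}(\underline{1},\underline{1},\underline{2})$ are pairwise non-isomorphic since their score sequences $(0,1,2,3)$, $(1,1,1,3)$, $(0,2,2,2)$, $(1,1,2,2)$ are pairwise distinct. There is no genuine obstacle in this argument: everything is elementary, and the only steps needing a little care are the arc-chase in the last case and verifying that the peeled-off subtournaments really realize the stated lexicographic sums.
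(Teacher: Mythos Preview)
Your argument is correct. The paper states this as a \emph{Fact} without proof (it is a standard elementary enumeration, accompanied only by Figure~\ref{t4}), so there is no proof in the paper to compare against. Your score-sequence trichotomy (a dominating vertex, a dominated vertex, or score sequence $(1,1,2,2)$) together with the short arc-chase in the third case is a clean way to supply the omitted details, and the final check via score sequences is the simplest way to see that the four tournaments are pairwise non-isomorphic.
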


\begin{Fact} \label{tournois5}
	Up to isomorphism, the $5$-vertex tournaments are $\underline{5}$, $\underline{3}(\underline{1}, \underline{1}, C_3)$, $\underline{3}(C_3,\underline{1}, \underline{1})$, $\underline{3}(\underline{1}, C_3,\underline{1})$, $\underline{2}(\underline{1}, C_4)$, $\underline{2}(C_4, \underline{1})$, $C_3(\underline{1}, \underline{2}, \underline{2})$, $C_3(\underline{1}, \underline{1}, C_3)$, $C_3(\underline{1}, \underline{1}, \underline{3})$, $U_5$, $V_5$, and $W_5$ (see Figure~\ref{t5}).
	\begin{figure}[!h]
		\begin{center}
		\newrgbcolor{tttttt}{0.2 0.2 0.2}
		\psset{xunit=1.0cm,yunit=1.0cm,algebraic=true,dotstyle=o,dotsize=3pt 0,linewidth=0.1pt,arrowsize=3pt 2,arrowinset=0.3}
		\begin{pspicture*}(-3.54,-0.22)(7.68,5.78)
		\pspolygon[linecolor=white,fillcolor=white,fillstyle=solid,opacity=0.1](-1,4)(-2,4)(-1.54,4.98)
		\psline[linecolor=white](-1,4)(-2,4)
		\psline[linecolor=white](-2,4)(-1.54,4.98)
		\psline[linecolor=white](-1.54,4.98)(-1,4)
		\psline{->}(1,3)(0.5,3.98)
		\psline{->}(0.5,3.98)(0,3)
		\psline{->}(0,3)(1,3)
		\psline{->}(-1,4)(-1.54,4.98)
		\psline{->}(-1.54,4.98)(-2,4)
		\psline{->}(-2,4)(-1,4)
		\psline{->}(3,3.5)(2.44,4.42)
		\psline{->}(2.44,4.42)(2.02,3.5)
		\psline{->}(2.02,3.5)(3,3.5)
		\rput{90}(5.02,3.64){\psellipse(0,0)(0.56,0.25)}
		\psline{->}(4.77,3.56)(4,4.22)
		\psline{->}(4,4.22)(4.02,3.16)
		\psline{->}(4.02,3.16)(4.75,3.57)
		\rput{90}(7.02,4.22){\psellipse(0,0)(0.56,0.25)}
		\psline{->}(6.77,4.14)(6,4.8)
		\psline{->}(6,4.8)(6.02,3.74)
		\psline{->}(6.02,3.74)(6.75,4.15)
		\rput{90}(-2,0.99){\psellipse(0,0)(0.81,0.3)}
		\psline{->}(-2.3,0.96)(-3,1.5)
		\psline{->}(-3,1.5)(-3,0.5)
		\psline{->}(-3,0.5)(-2.3,0.96)
		\psline{->}(1,3)(0.5,3.98)
		\psline{->}(0.5,3.98)(0,3)
		\psline{->}(0,3)(1,3)
		\psline{->}(-0.8,1.52)(-0.78,0.5)
		\psline{->}(0.48,1)(0,1.42)
		\rput{-89.2}(0.21,0.98){\psellipse(0,0)(0.83,0.41)}
		\psline{->}(0,1.42)(0.02,0.66)
		\psline{->}(0.02,0.66)(0.48,1)
		\psline{->}(-0.2,1.02)(-0.8,1.52)
		\psline{->}(-0.78,0.5)(-0.2,1.02)
		\rput{90}(2.8,0.94){\psellipse(0,0)(0.56,0.25)}
		\rput{90}(1.6,0.94){\psellipse(0,0)(0.56,0.25)}
		\psline{->}(2.18,1.8)(1.85,0.99)
		\psline{->}(1.85,0.99)(2.54,0.98)
		\psline{->}(2.55,0.98)(2.18,1.8)
		\psline{->}(4.62,1)(4,1.76)
		\psline{->}(4,1.26)(4.62,1)
		\psline{->}(4.62,1)(4,0.72)
		\psline{->}(4,0.24)(4.62,1)
		\psline{->}(5.22,0.16)(5.98,0.5)
		\psline{->}(5.98,0.5)(5.22,0.98)
		\psline{->}(5.22,0.98)(5.96,1.4)
		\psline{->}(5.96,1.4)(5.22,1.84)
		\psline{->}(5.22,0.16)(5.96,1.4)
		\psline{->}(5.98,0.5)(5.22,1.84)
		\psline{->}(6.64,0.16)(7.4,0.5)
		\psline{->}(7.4,0.5)(6.64,0.98)
		\psline{->}(6.64,0.98)(7.38,1.4)
		\psline{->}(7.38,1.4)(6.64,1.84)
		\psline{->}(6.64,0.16)(7.38,1.4)
		\psline{->}(7.4,0.5)(6.64,1.84)
		\psline{->}(7.4,0.5)(7.38,1.4)
		\psline(-3.44,-0.04)(-3.42,5.58)
		\psline(7.54,-0.02)(-3.44,-0.04)
		\psline(7.54,5.52)(7.54,-0.02)
		\psline(7.54,5.52)(-3.42,5.58)
		\psline(-3.43,2.68)(7.54,2.68)
		\psline(-2.36,2.68)(-2.37,5.57)
		\psline(-0.6,5.56)(-0.59,2.68)
		\psline(1.4,2.68)(1.41,5.55)
		\psline(3.34,5.54)(3.34,2.68)
		\psline(5.42,5.53)(5.45,2.68)
		\psline(-1.44,2.68)(-1.47,-0.04)
		\psline(-1.44,2.68)(0.85,2.68)
		\psline(0.85,2.68)(0.84,-0.03)
		\psline(3.34,2.68)(3.35,-0.03)
		\psline(4.71,2.68)(4.72,-0.03)
		\psline(6.11,2.68)(6.14,-0.02)
		\begin{scriptsize}
		\psdots[dotstyle=*](-3,5)
		\psdots[dotstyle=*](-3,4.5)
		\psdots[dotstyle=*](-3,4)
		\psdots[dotstyle=*](-3,3.5)
		\psdots[dotstyle=*](-3,3)
		\psdots[dotstyle=*](-1.54,4.98)
		\psdots[dotstyle=*](-2,4)
		\psdots[dotstyle=*](-1.54,3.54)
		\psdots[dotstyle=*](-1.54,3)
		\psdots[dotstyle=*](-1,4)
		\psdots[dotstyle=*](1,3)
		\psdots[dotstyle=*](0.5,3.98)
		\psdots[dotstyle=*](0,3)
		\psdots[dotstyle=*](0.48,4.98)
		\psdots[dotstyle=*](0.5,4.46)
		\psdots[dotstyle=*](3,3.5)
		\psdots[dotstyle=*](2.44,4.42)
		\psdots[dotstyle=*,linecolor=tttttt](2.02,3.5)
		\psdots[dotstyle=*](2.46,4.96)
		\psdots[dotstyle=*](2.5,2.98)
		\psdots[dotstyle=*](4.46,4.84)
		\psdots[dotstyle=*](5,4)
		\psdots[dotstyle=*](4,4.22)
		\psdots[dotstyle=*](4.02,3.16)
		\psdots[dotstyle=*](5.02,3.32)
		\psdots[dotstyle=*](6.46,3.14)
		\psdots[dotstyle=*](7,4.58)
		\psdots[dotstyle=*](6,4.8)
		\psdots[dotstyle=*](6.02,3.74)
		\psdots[dotstyle=*](7.02,3.9)
		\psdots[dotstyle=*](-2,1)
		\psdots[dotstyle=*](-2,1.52)
		\psdots[dotstyle=*](-2,0.5)
		\psdots[dotstyle=*](-3,1.5)
		\psdots[dotstyle=*](-3,0.5)
		\psdots[dotstyle=*](-0.8,1.52)
		\psdots[dotstyle=*](-0.78,0.5)
		\psdots[dotstyle=*](0,1.42)
		\psdots[dotstyle=*](0.48,1)
		\psdots[dotstyle=*](0.02,0.66)
		\psdots[dotstyle=*](2.8,1.3)
		\psdots[dotstyle=*](2.82,0.62)
		\psdots[dotstyle=*](1.58,1.3)
		\psdots[dotstyle=*](1.6,0.62)
		\psdots[dotstyle=*](2.18,1.8)
		\psdots[dotstyle=*](4,1.76)
		\psdots[dotstyle=*](4,0.24)
		\psdots[dotstyle=*](4,0.72)
		\psdots[dotstyle=*](4,1.26)
		\psdots[dotstyle=*](4.62,1)
		\psdots[dotstyle=*](5.22,1.84)
		\psdots[dotstyle=*](5.22,0.98)
		\psdots[dotstyle=*](5.96,1.4)
		\psdots[dotstyle=*](5.98,0.5)
		\psdots[dotstyle=*](6.64,1.84)
		\psdots[dotstyle=*](6.64,0.98)
		\psdots[dotstyle=*](6.64,0.16)
		\psdots[dotstyle=*](7.38,1.4)
		\psdots[dotstyle=*](7.4,0.5)
		\psdots[dotstyle=*](5.23,0.17)
		\end{scriptsize}
		\end{pspicture*}
	\caption{The 5-vertex tournaments (missing arcs are oriented from higher to lower).}\label{t5}
	\end{center}
\end{figure}

\end{Fact}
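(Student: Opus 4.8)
The plan is to classify the $5$-vertex tournaments according to their modular structure, with the lexicographic sum serving as the bookkeeping device; Fact~\ref{tournois4} is then obtained by the same argument restricted to four vertices. First, if $T$ is indecomposable then, by the description of the indecomposable $5$-vertex tournaments recalled above (see \cite{BB}), $T$ is isomorphic to $U_{5}$, $V_{5}$ or $W_{5}$. So one may assume $T$ decomposable and fix a nontrivial module $M$ of $T$ of \emph{maximum} cardinality, so that $|M|\in\{2,3,4\}$; I would then split into three cases according to the value of $|M|$.

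If $|M|=4$, the unique vertex outside $M$ is uniformly adjacent to $M$, so $T\simeq\underline{2}(\underline{1},T[M])$ or $T\simeq\underline{2}(T[M],\underline{1})$, where, by Fact~\ref{tournois4}, $T[M]$ is one of $\underline{4}$, $\underline{2}(\underline{1},C_{3})$, $\underline{2}(C_{3},\underline{1})$, $C_{3}(\underline{1},\underline{1},\underline{2})$. Expanding these nested lexicographic sums, and using that two adjacent transitive blocks merge into a single transitive block, the eight resulting tournaments reduce, up to isomorphism, to $\underline{5}$, $\underline{3}(\underline{1},\underline{1},C_{3})$, $\underline{3}(\underline{1},C_{3},\underline{1})$, $\underline{3}(C_{3},\underline{1},\underline{1})$, $\underline{2}(\underline{1},C_{4})$ and $\underline{2}(C_{4},\underline{1})$, where $C_{4}=C_{3}(\underline{1},\underline{1},\underline{2})$ is the strongly connected $4$-vertex tournament.

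If $|M|=3$, so that $T$ has no module of cardinality $4$, let $x$ and $y$ be the two vertices outside $M$; each is uniformly adjacent to $M$. A short case check shows that if $x$ and $y$ lie on the same side of $M$, or if $x\to M\to y$ with $x\to y$, then $M$ together with one of $x,y$ is a module of cardinality $4$, a contradiction; hence, after possibly exchanging $x$ and $y$, one has $x\to M$, $M\to y$ and $y\to x$, so $T\simeq C_{3}(\underline{1},T[M],\underline{1})\simeq C_{3}(\underline{1},\underline{1},T[M])$ with $T[M]\in\{\underline{3},C_{3}\}$, giving $C_{3}(\underline{1},\underline{1},\underline{3})$ and $C_{3}(\underline{1},\underline{1},C_{3})$. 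If $|M|=2$, so that $T$ has no module of cardinality $3$ or $4$, then $M$ is a twin and $T\simeq R(\underline{1},\dots,\underline{1},\underline{2},\underline{1},\dots,\underline{1})$ for some $4$-vertex tournament $R$, the factor $\underline{2}$ occupying one of the four positions; running through the four possibilities for $R$ (Fact~\ref{tournois4}) and through the chosen position, every outcome either has a module of cardinality $\ge 3$ (hence already occurs in a previous case) or is isomorphic to $C_{3}(\underline{1},\underline{2},\underline{2})$.

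Putting the three cases together with the indecomposable one produces exactly the twelve tournaments of the statement. To conclude, I would check that these twelve are pairwise non-isomorphic — a routine verification by comparing score sequences and, for the few pairs sharing a score sequence (such as $C_{3}(\underline{1},\underline{2},\underline{2})$, $C_{3}(\underline{1},\underline{1},C_{3})$ and $V_{5}$), by comparing the cardinalities of their nontrivial modules (or merely invoking indecomposability) — so that, the case analysis being exhaustive, the list is complete; this is moreover consistent with the classical count of twelve tournaments on five vertices up to isomorphism. I expect the main work to be the isomorphism bookkeeping in the cases $|M|=4$ and $|M|=2$, i.e. recognizing precisely which nested lexicographic sums and which vertex blow-ups coincide, together with the ``no larger module'' checks that keep the three cases disjoint.
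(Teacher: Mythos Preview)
Your argument is correct: the case split on the maximum nontrivial module, combined with Fact~\ref{tournois4} and the cited classification of the indecomposable $5$-tournaments, does produce exactly the twelve listed tournaments, and your exclusion checks (no module of size $4$ in the $|M|=3$ case, no module of size $\ge 3$ for $C_{3}(\underline{1},\underline{2},\underline{2})$) are valid.

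Note, however, that the paper does not prove this statement at all. It is presented as a \emph{Fact} --- i.e., as the well-known enumeration of the twelve $5$-vertex tournaments --- supported only by Figure~\ref{t5}; the modular/lexicographic descriptions are simply convenient names for later use. So your write-up supplies a self-contained derivation where the paper merely records a classical list. That is a reasonable thing to include, but you should be aware that you are adding content rather than reproducing the paper's reasoning, and you may want to shorten it accordingly (for instance, once the decomposable case is written as a lexicographic sum over a smaller quotient, invoking Fact~\ref{tournois4} and the known count of twelve makes the detailed $|M|=2$ bookkeeping unnecessary).
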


\section{Description of the tournaments with $\Delta$-maximal decomposability}\label{Deltamax}
Let us denote by $\mathcal{T}_{\Delta}$ the class of the tournaments with $\Delta$-maximal decomposability. For every integer $m \geq 3$, we denote by $\mathcal{T}_{\Delta }^m$ the class of the $m$-vertex tournaments of the class $\mathcal{T}_{\Delta}$. Thus $\mathcal{T}_{\Delta} = \displaystyle\bigcup_{m \geq 3} \mathcal{T}_{\Delta}^m$. The description of the class $\mathcal{T}_{\Delta}^m$ depends on the parity of the integer $m$. The class $\mathcal{T}_{\Delta}^m$ is characterized in Proposition~\ref{thmDelta0} when $m$ is even, in Proposition~\ref{thmDelta1} when $m$ is odd. The class $\mathcal{T}_{\Delta}$ is then characterized by Propositions~\ref{thmDelta0} and \ref{thmDelta1}. 

\begin{prop}\label{thmDelta0}
	Up to isomorphism, the tournaments of the class $\mathcal{T}_{\Delta}^{2n}$, where $n \geq 2$, are the tournaments $$\underline{3}(\underline{1},T_{n-1}.\underline{2},\underline{1}),$$ where $T_{n-1}$ is  an arbitrary $(n-1)$-vertex tournament.
\end{prop}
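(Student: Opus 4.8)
The plan is to prove the two inclusions. \emph{Easy inclusion.} For an arbitrary $(n-1)$-vertex tournament $T_{n-1}$, set $U = \underline{3}(\underline{1}, T_{n-1}.\underline{2}, \underline{1})$ and let $\{a\}$, $N := V(T_{n-1}.\underline{2})$, $\{b\}$ be its three blocks. Since $\{1,2\}$ and $\{0,1\}$ are modules of $\underline{3}$, the sets $N \cup \{b\}$ and $\{a\} \cup N$ are nontrivial modules of $U$, so $\{a\}$ and $\{b\}$ are co-modules of $U$; and each of the $n-1$ copies of $\underline{2}$ inside $T_{n-1}.\underline{2}$ is a module of $T_{n-1}.\underline{2}$, hence a nontrivial module, and so a co-module, of $U$. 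These $n+1$ co-modules are pairwise disjoint, so $\Delta(U) \geq n+1$; as $\Delta(U) \leq \Delta(2n) = \lceil (2n+1)/2 \rceil = n+1$ by Theorem~\ref{deltan}, we get $\Delta(U) = n+1$, i.e.\ $U \in \mathcal{T}_\Delta^{2n}$.

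\emph{Hard inclusion, counting step.} Let $T \in \mathcal{T}_\Delta^{2n}$ and fix a $\Delta$-decomposition $D$ of $T$, so $|D| = \Delta(T) = n+1$ (no minimality of the co-modules will be needed). Since $n \geq 2$ we have $\Delta(T) \geq 3 \geq 2$, so $T$ is decomposable and Corollary~\ref{rcomod} applies. Put $s = |{\rm sg}(D)| \leq 2$. The $s$ singletons and the $n+1-s$ members of $D_{\geq 2}$ are pairwise disjoint subsets of $V(T)$, so $s + 2(n+1-s) \leq 2n$, which forces $s \geq 2$. Hence $s = 2$, every member of $D_{\geq 2}$ has size exactly $2$, and these $n+1$ sets partition $V(T)$. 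Write $D = \{\{a\}, \{b\}, M_1, \dots, M_{n-1}\}$ with $a \neq b$ and $|M_i| = 2$. By Corollary~\ref{rcomod} each $M_i$ is a nontrivial module of $T$; and since a singleton is a trivial module, $V(T)\setminus\{a\}$ and $V(T)\setminus\{b\}$ are nontrivial modules of $T$.

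\emph{Hard inclusion, structural step.} Because $V(T)\setminus\{a\}$ is a module whose complement is the single vertex $a$, the vertex $a$ either dominates every other vertex of $T$ or is dominated by every other vertex of $T$; likewise for $b$. Since $T$ is a tournament, $a$ and $b$ cannot both dominate, nor both be dominated by, all other vertices, so — after replacing $T$ by $T^\star$ if necessary (which changes neither $\Delta(T)$ nor membership in the target class) — we may assume $a$ dominates all other vertices and $b$ is dominated by all other vertices. Then $W := V(T) \setminus \{a,b\} = (V(T)\setminus\{a\}) \cap (V(T)\setminus\{b\})$ is a module of $T$, and the adjacencies "$a$ dominates $W \cup \{b\}$, and $W$ dominates $b$" give $T \simeq \underline{3}(\underline{1}, T[W], \underline{1})$ with blocks $\{a\}, W, \{b\}$. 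Finally, each $M_i$ is disjoint from $\{a,b\}$, hence $M_i \subseteq W$, and being a module of $T$ contained in $W$ it is a module of $T[W]$; since $M_1, \dots, M_{n-1}$ partition $W$ into modules of size $2$, we obtain $T[W] = \Sigma_{i \in T_{n-1}} T[M_i] = T_{n-1}.\underline{2}$, where $T_{n-1}$ is the quotient of $T[W]$ by $\{M_1, \dots, M_{n-1}\}$. Thus $T \simeq \underline{3}(\underline{1}, T_{n-1}.\underline{2}, \underline{1})$, as claimed.

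\emph{Main obstacle.} The proof is short; the one step carrying real content is the counting that pins down $s = 2$ and $|M_i| = 2$, which is precisely where the evenness of $2n$ and the exact value $\Delta(2n) = n+1$ from Theorem~\ref{deltan} enter. Everything else is elementary module calculus — an intersection of two modules is a module, a module of $T$ contained in another module $W$ induces a module of $T[W]$, and a partition of a module into submodules exhibits it as a lexicographic sum over the quotient — together with the harmless normalization "$a$ dominates all, $b$ is dominated by all" via the dual, legitimate because $\Delta$, the class $\mathcal{T}_\Delta^{2n}$, and the family $\underline{3}(\underline{1}, T_{n-1}.\underline{2}, \underline{1})$ are all closed under passing to duals.
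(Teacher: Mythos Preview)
Your proof is correct and follows essentially the same approach as the paper's: both directions rest on the counting $s + 2(n+1-s) \leq 2n$ forcing $|{\rm sg}(D)|=2$, all remaining blocks to be twins, and $\cup D = V(T)$, after which the structure $\underline{3}(\underline{1},T_{n-1}.\underline{2},\underline{1})$ falls out. Your write-up is more explicit than the paper's (which jumps directly from ``$|{\rm sg}(D)|=2$ and the elements of $D_{\geq 2}$ are twins'' to the conclusion), and you correctly note that minimality of the $\Delta$-decomposition is not needed here---the paper takes a $\delta$-decomposition but never uses that extra hypothesis in this proof; one small simplification is that the passage to the dual is unnecessary, since swapping the labels $a$ and $b$ already reduces to the case ``$a$ dominates all, $b$ is dominated by all''.
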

\begin{proof}
	Let $T_{n-1}$ be a tournament with $n-1$ vertices.
	Clearly, $v(\underline{3}(\underline{1},T_{n-1}.\underline{2},\underline{1})) = 2n$ and $\underline{3}(\underline{1},T_{n-1}.\underline{2},\underline{1})$ admits a $\Delta$-decomposition of cardinality $n+1$. Since $\Delta(2n) = n+1$ by Theorem~\ref{deltan}, it follows that
	$\Delta(\underline{3}(\underline{1},T_{n-1}.\underline{2},\underline{1}))=\Delta(2n)$.
	 Thus $\underline{3}(\underline{1},T_{n-1}.\underline{2},\underline{1}) \in \mathcal{T}_{\Delta}^{2n}$.
	
	Conversely, let $T$ be a tournament  of $\mathcal{T}_{\Delta}^{2n}$. Let $D$ be a $\delta$-decomposition of $T$. Since $\Delta(2n) = n+1$ (see Theorem~\ref{deltan}) and $v(T)=2n$, we have $\vert D\vert= n+1$. Since $v(T) = 2n$ and $|D|= n+1$, it follows from Corollary~\ref{rcomod} that $|\text{sg}(D)|=2$ and that the elements of $D_{\geq 2}$ are nontrivial modules of $T$. Moreover, the elements of $D_{\geq 2}$ are twins of $T$ and $\cup D = V(T)$. Therefore, there exists a tournament $T_{n-1}$ with $n-1$ vertices such that $T$ is $\underline{3}(\underline{1},T_{n-1}.\underline{2},\underline{1})$.
\end{proof}
 
 \begin{prop}\label{thmDelta1}
 	Up to isomorphism, the tournaments of the  class $\mathcal{T}_{\Delta}^{2n+1}$, where $n \geq 1$, are the tournaments of one of the following types, where $T_{k}$ is  an arbitrary $k$-vertex tournament:
 	\begin{enumerate}
 		\item[${\rm{T}}1$.] $\underline{2}(\underline{1},T_{n}.\underline{2})$ or its dual;
 		\item[${\rm{T}}2$.] $\underline{3}(\underline{1},T_{n}(\underline{1},\underline{2},\ldots,\underline{2}),\underline{1})$;
 		\item[${\rm{T}}3$.]$\underline{3}(\underline{1},T_{n-1}(C_{3},\underline{2},\ldots,\underline{2}),\underline{1})$ when $n \geq 2$.
 	\end{enumerate}
 \end{prop}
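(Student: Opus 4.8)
The plan is to prove the two inclusions separately, the ``easy'' one by exhibiting co-modular decompositions and the ``hard'' one by analysing a $\delta$-decomposition. Throughout recall from Theorem~\ref{deltan} that $\Delta(2n+1) = \left\lceil \frac{2n+2}{2} \right\rceil = n+1$. For the easy inclusion I would check, for each tournament $T$ of type ${\rm T}1$, ${\rm T}2$ or ${\rm T}3$, that $v(T) = 2n+1$ and that $T$ admits a co-modular decomposition of size $n+1$; then $\Delta(T) \ge n+1 = \Delta(2n+1)$, so $\Delta(T) = \Delta(2n+1)$ and $T \in \mathcal{T}_\Delta^{2n+1}$. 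Concretely: for ${\rm T}1$ take the isolated vertex together with the $n$ twins of $T_n.\underline{2}$; for ${\rm T}2$ take the two outer vertices together with the $n-1$ twins inside $T_n(\underline{1},\underline{2},\ldots,\underline{2})$; for ${\rm T}3$ take the two outer vertices, the $C_3$-block, and the $n-2$ twins inside $T_{n-1}(C_3,\underline{2},\ldots,\underline{2})$. Since a single vertex (resp.\ the union of two vertices) of a lexicographic sum is a module of that sum, and a module of a module of $T$ is a module of $T$, all the listed sets are nontrivial modules of $T$ or complements thereof, hence co-modules; counting their cardinalities gives the size $n+1$. The case $n=1$ is degenerate: all three types collapse to $\underline{3}$, and $\Delta(\underline{3}) = 2 = n+1$.

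For the converse, let $T \in \mathcal{T}_\Delta^{2n+1}$ and fix a $\delta$-decomposition $D$ of $T$ (obtained from any $\Delta$-decomposition by shrinking each element to a minimal co-module it contains), so $|D| = \Delta(T) = n+1$. Since the $n+1$ members of $D$ are pairwise disjoint subsets of a $(2n+1)$-set and every non-singleton has at least two elements, $2n+1 \ge |{\rm sg}(D)| + 2\bigl(n+1 - |{\rm sg}(D)|\bigr)$, i.e.\ $|{\rm sg}(D)| \ge 1$; together with Corollary~\ref{rcomod} this gives $|{\rm sg}(D)| \in \{1,2\}$ and, moreover, that the elements of $D_{\ge 2}$ are nontrivial modules of $T$. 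If $|{\rm sg}(D)| = 1$, equality in the displayed count forces $\cup D = V(T)$ and every element of $D_{\ge 2}$ to be a twin; writing $\{v\} = {\rm sg}(D)$, the set $V(T) \setminus \{v\}$ is a module of $T$ (as $\{v\}$ is a co-module) which $D_{\ge 2}$ partitions into $n$ twins, so $\{\{v\}\} \cup D_{\ge 2}$ is a modular partition of $T$; passing to quotients, $T \simeq \underline{2}(\underline{1}, T_n.\underline{2})$ or $T \simeq \underline{2}(T_n.\underline{2}, \underline{1})$, i.e.\ type ${\rm T}1$ or its dual.

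Now suppose $|{\rm sg}(D)| = 2$, say ${\rm sg}(D) = \{\{u\},\{w\}\}$. Since $\{u\}$ and $\{w\}$ are co-modules, $V(T) \setminus \{u\}$ and $V(T) \setminus \{w\}$ are nontrivial modules of $T$, hence so is their intersection $M_0 := V(T) \setminus \{u,w\}$, and $\{\{u\}, M_0, \{w\}\}$ is a modular partition of $T$ with a $3$-vertex quotient $Q$. The crucial step is to show $Q \simeq \underline{3}$ with $M_0$ as its middle part: if $Q \simeq C_3$ then no union of two parts is a module of $T$, contradicting that $V(T) \setminus \{u\}$ is one; and if $Q \simeq \underline{3}$ but $M_0$ is the source or the sink of $Q$, then among the two singleton parts only one has a module complement, so only one of $\{u\}, \{w\}$ is a co-module --- again a contradiction. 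Hence $T \simeq \underline{3}(\underline{1}, S, \underline{1})$ with $S := T[M_0]$ and $v(S) = 2n-1$. The $n-1$ members of $D_{\ge 2}$ lie in $M_0$ and are nontrivial modules of $S$; the counting inequality leaves exactly two possibilities. Either they are all twins, missing one vertex $z$ of $M_0$: then $D_{\ge 2} \cup \{\{z\}\}$ is a modular partition of $S$ into $n-1$ twins and a singleton, so $S \simeq T_n(\underline{1}, \underline{2}, \ldots, \underline{2})$ and $T$ has type ${\rm T}2$. Or they are $n-2$ twins and one $3$-element module $N$, which by Observation~\ref{rr2}(1) satisfies $T[N] \simeq C_3$: then $D_{\ge 2}$ is a modular partition of $S$ into $n-2$ twins and $N$, so $S \simeq T_{n-1}(C_3, \underline{2}, \ldots, \underline{2})$ and $T$ has type ${\rm T}3$. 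This exhausts all cases.

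The main obstacle is the rigidity step in the two-singleton case, namely forcing the $3$-vertex quotient $Q$ to be transitive with $M_0$ in the middle; once this is settled, the remainder is bookkeeping with the counting inequality, Corollary~\ref{rcomod}, Observation~\ref{rr2}, and the elementary fact that a partition of a tournament into modules produces a quotient tournament and a corresponding lexicographic-sum decomposition.
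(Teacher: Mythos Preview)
Your proof is correct and follows essentially the same approach as the paper's: analyze a $\delta$-decomposition $D$ with $|D|=n+1$, split according to $|{\rm sg}(D)|\in\{1,2\}$ (forced by counting together with Corollary~\ref{rcomod}), and then use the size constraints to pin down the lexicographic-sum structure, invoking Observation~\ref{rr2} for the $3$-element block. The one place you go beyond the paper is the explicit verification, in the two-singleton case, that the quotient on $\{\{u\},M_0,\{w\}\}$ is $\underline{3}$ with $M_0$ in the middle; the paper jumps straight from ``$|{\rm sg}(D)|=2$ and the $D_{\ge 2}$ are twins (resp.\ twins plus one $C_3$)'' to ``$T$ has type ${\rm T}2$ (resp.\ ${\rm T}3$)'', leaving that structural step implicit, whereas you spell it out.
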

 \begin{proof}
 	Let $T$ be a tournament of type $\text{T}1$, $\text{T}2$, or $\text{T}3$. Clearly, $v(T) = 2n+1$ and $T$ admits a $\Delta$-decomposition of cardinality $n+1$. Since $\Delta(2n+1) = n+1$ (see Theorem~\ref{deltan}) and $v(T) =2n+1$, it follows that $\Delta(T)= \Delta(2n+1)$. Thus $T \in \mathcal{T}_{\Delta}^{2n+1}$.
 	
 	Conversely, let $T$ be a tournament  of $ \mathcal{T}_{\Delta}^{2n+1}$. 
 	Let $D$ be a $\delta$-decomposition of $T$. 
 	Since $|D| = \Delta(T) = \Delta(2n+1)$ and $\Delta(2n+1) =n+1$ (see Theorem~\ref{deltan}), we have $\vert D\vert= n+1$. 
 	Since 
 	\begin{equation} \label{eq0}
 	v(T)=2n+1 \ \ \text{and} \ \ \vert D\vert =n+1,
 	\end{equation}
 	it follows from Corollary~\ref{rcomod} that $\vert \text{sg}(D) \vert \geq 1 $, and that 
 	\begin{equation} \label{eq1}
 	\text{the elements of} \ D_{\geq 2} \ \text{are nontrivial modules of} \ T. 
 	\end{equation}
 	If $\vert \text{sg}(D)\vert =1$, then by (\ref{eq0}) and (\ref{eq1}), $\cup D = V(T)$ and the elements of $D_{\geq 2}$ are twins of $T$. In this instance, $T$ has type $\text{T}1$.
 	Now suppose $\vert \text{sg}(D)\vert =2$. If the elements of $D_{\geq 2}$ are twins of $T$, then $|\cup D| = v(T)-1$ and $T$ has type $\text{T}2$. Now, suppose there exists an element $L \in D_{\geq 2}$ such that $L$ is not a twin  of $T$. In this instance $n \geq 2$, i.e. $v(T) \geq 5$. By (\ref{eq0}) and (\ref{eq1}), $\cup D = V(T)$, $\vert L\vert =3$ and the elements of $D_{\geq 2}\setminus\{ L\}$ are twins of $T$. Moreover, since $L \in D_{\geq 2}$, it follows from (\ref{eq1}) and Observation~\ref{rr2} that $L$ is a module of $T$ such that 
 	$T[L]\simeq C_{3}$. Therefore, the tournament $T$ has type~$\text{T}3$ with $n \geq 2$. 	
 \end{proof}

\section{Description of the tournaments with $\delta$-maximal decomposability}\label{deltamax}
Let us denote by $\mathcal{T}_{\delta}$ the class of the tournaments with $\delta$-maximal decomposability. For every integer $m \geq 5$, we denote by $\mathcal{T}_{\delta}^m$ the class of the $m$-vertex tournaments of the class $\mathcal{T}_{\delta}$. Thus $\mathcal{T}_{\delta} = \displaystyle\bigcup_{m \geq 5} \mathcal{T}_{\delta}^m$. According to Corollary~\ref{cor}, the description of the class $\mathcal{T}_{\delta}^m$ should depend on the congruence of the integer $m$ modulo $4$. The class $\mathcal{T}_{\delta}^m$ is characterized in Theorem~\ref{p4n} when $m \equiv 0 \ \text{mod} \ 4$, in Theorem~\ref{p4n+1} when $m \equiv 1 \ \text{mod} \ 4$, in Theorem~\ref{p4n+2} when $m \equiv 2 \ \text{mod} \ 4$, and in Theorem~\ref{p4n+3} when $m \equiv 3 \ \text{mod} \ 4$. The class $\mathcal{T}_{\delta}$ is then characterized by Theorems~\ref{p4n}--\ref{p4n+3}. 

It follows from the first assertion of Corollary~\ref{cor} that $\mathcal{T}_{\delta}^{4n}= \mathcal{T}_{\Delta}^{4n}$ for every $n \geq 2$, and $\mathcal{T}_{\delta}^{4n+1}= \mathcal{T}_{\Delta}^{4n+1}$ for every $n \geq 1$. Since the classes $\mathcal{T}_{\Delta}^{4n}$ and $\mathcal{T}_{\Delta}^{4n+1}$ are characterized in Propositions~\ref{thmDelta0} and \ref{thmDelta1} respectively, we immediately obtain the following two first theorems. 
 
\begin{thm}\label{p4n}
Up to isomorphism, the tournaments of the  class $\mathcal{T}_{\delta}^{4n}$, where $n \geq 2$, are the tournaments $$\underline{3}(\underline{1},T_{2n-1}.\underline{2},\underline{1}),$$ where $T_{2n-1}$ is  an arbitrary $(2n-1)$-vertex tournament.
\end{thm}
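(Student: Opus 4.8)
The plan is to invoke Corollary~\ref{cor} to reduce the statement about $\delta$-maximality to a statement about $\Delta$-maximality, and then apply Proposition~\ref{thmDelta0} directly. Concretely, write $m = 4n$ with $n \geq 2$. Since $4n \equiv 0 \pmod 4$, the first assertion of Corollary~\ref{cor} gives that for any decomposable $4n$-vertex tournament $T$, $\delta(T) = \delta(4n)$ if and only if $\Delta(T) = \Delta(4n)$; and every tournament in $\mathcal{T}_\delta^{4n}$ is decomposable (a tournament with $\delta$-maximal decomposability has $\delta(T) = \lceil (4n+1)/4 \rceil \geq 2 > 0$, so it is not indecomposable). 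Hence $\mathcal{T}_\delta^{4n} = \mathcal{T}_\Delta^{4n}$.

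Next I would apply Proposition~\ref{thmDelta0} with its parameter (call it $n_0$) set to $2n$, so that $2n_0 = 4n$; the hypothesis $n_0 = 2n \geq 2$ holds since $n \geq 2$ (indeed $n \geq 1$ would suffice). Proposition~\ref{thmDelta0} then states that, up to isomorphism, the tournaments of $\mathcal{T}_\Delta^{4n}$ are exactly the tournaments $\underline{3}(\underline{1}, T_{2n-1}.\underline{2}, \underline{1})$ where $T_{2n-1}$ ranges over all $(2n-1)$-vertex tournaments. Combining this with $\mathcal{T}_\delta^{4n} = \mathcal{T}_\Delta^{4n}$ yields exactly the claimed description. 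The argument is essentially a two-line deduction, so there is no serious obstacle; the only point requiring a word of care is the observation that members of $\mathcal{T}_\delta^{4n}$ are decomposable, which is needed to be entitled to use Corollary~\ref{cor}. For completeness one should also note the trivial direction — that every $\underline{3}(\underline{1}, T_{2n-1}.\underline{2}, \underline{1})$ indeed lies in $\mathcal{T}_\delta^{4n}$ — but this is already contained in the forward direction of Proposition~\ref{thmDelta0} together with the equality $\mathcal{T}_\delta^{4n} = \mathcal{T}_\Delta^{4n}$.

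In short: the proof consists of (i) recording that tournaments with $\delta$-maximal decomposability on $4n \geq 8$ vertices are decomposable; (ii) citing the first assertion of Corollary~\ref{cor} to get $\mathcal{T}_\delta^{4n} = \mathcal{T}_\Delta^{4n}$; and (iii) quoting Proposition~\ref{thmDelta0} to identify $\mathcal{T}_\Delta^{4n}$ with the family $\underline{3}(\underline{1}, T_{2n-1}.\underline{2}, \underline{1})$. No new combinatorial work on modules or co-modular decompositions is needed here, since all of it has been packaged into Proposition~\ref{thmDelta0} and Theorem~\ref{deltan}.
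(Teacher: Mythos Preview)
Your proposal is correct and follows exactly the same approach as the paper: the paper does not give a separate proof of this theorem but simply observes, in the paragraph preceding Theorems~\ref{p4n} and~\ref{p4n+1}, that the first assertion of Corollary~\ref{cor} yields $\mathcal{T}_{\delta}^{4n}=\mathcal{T}_{\Delta}^{4n}$, and then invokes Proposition~\ref{thmDelta0}. Your extra remark that members of $\mathcal{T}_{\delta}^{4n}$ are decomposable (needed to apply Corollary~\ref{cor}) is a small point the paper leaves implicit.
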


  \begin{thm}\label{p4n+1}
 Up to isomorphism, the tournaments of the  class $\mathcal{T}_{\delta}^{4n+1}$, where $n \geq 1$, are the tournaments with one  of the following forms, where $T_{k}$ is  an arbitrary $k$-vertex tournament:
 \begin{enumerate}
 \item[${\rm{F}}1$.] $\underline{2}(\underline{1},T_{2n}.\underline{2})$ or its dual;
 \item[${\rm{F}}2$.] $\underline{3}(\underline{1},T_{2n}(\underline{1},\underline{2},\ldots,\underline{2}),\underline{1})$;
 \item[${\rm{F}}3$.]$\underline{3}(\underline{1},T_{2n-1}(C_{3},\underline{2},\ldots,\underline{2}),\underline{1})$.
 \end{enumerate}
 \end{thm}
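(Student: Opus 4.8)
The plan is to derive Theorem~\ref{p4n+1} immediately from Corollary~\ref{cor} and Proposition~\ref{thmDelta1}, exploiting the two elementary observations that $4n+1 \equiv 1 \pmod 4$ and that $4n+1 = 2(2n)+1$. No genuinely new argument should be needed; the whole content of the proof is a reduction to results already established in Section~\ref{urmc} and Section~\ref{Deltamax}.

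Concretely, I would proceed in two short steps. First, since $4n+1 \equiv 1 \pmod 4$, the first assertion of Corollary~\ref{cor} applies: for a tournament $T$ with $v(T)=4n+1$ we have $\delta(T)=\delta(4n+1)$ if and only if $\Delta(T)=\Delta(4n+1)$. Hence $\mathcal{T}_{\delta}^{4n+1}=\mathcal{T}_{\Delta}^{4n+1}$ for every $n\geq 1$, which reduces the statement to a description of $\mathcal{T}_{\Delta}^{4n+1}$. Second, I would invoke Proposition~\ref{thmDelta1} with its parameter $n$ replaced by $2n$; this substitution is legitimate because $4n+1=2(2n)+1$, and because for every $n\geq 1$ we have $2n\geq 2$, so type~$\mathrm{T}3$ of Proposition~\ref{thmDelta1} is available throughout the range of the theorem. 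Substituting $2n$ for $n$ turns the building blocks $T_n$, $T_{n-1}$ into $T_{2n}$, $T_{2n-1}$, so the three types $\mathrm{T}1$, $\mathrm{T}2$, $\mathrm{T}3$ of Proposition~\ref{thmDelta1} become verbatim the forms $\mathrm{F}1$, $\mathrm{F}2$, $\mathrm{F}3$ of the statement, namely $\underline{2}(\underline{1},T_{2n}.\underline{2})$ or its dual, $\underline{3}(\underline{1},T_{2n}(\underline{1},\underline{2},\ldots,\underline{2}),\underline{1})$, and $\underline{3}(\underline{1},T_{2n-1}(C_{3},\underline{2},\ldots,\underline{2}),\underline{1})$. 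Combining the two steps gives that, up to isomorphism, $\mathcal{T}_{\delta}^{4n+1}$ consists exactly of the tournaments of forms $\mathrm{F}1$, $\mathrm{F}2$, $\mathrm{F}3$, which is the claim.

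Since every substantive ingredient—the identity $\delta(T)=\lceil \Delta(T)/2\rceil$, the value of $\Delta(n)$, and the classification in Proposition~\ref{thmDelta1}—is already in hand, there is essentially no hard step. The only point requiring a little care is the parameter bookkeeping: one must keep track of the shift from the index $n$ of Proposition~\ref{thmDelta1} to the index $n$ of the present theorem, in particular matching the sizes of the arbitrary building tournaments ($T_{2n}$ versus $T_{2n-1}$) and checking that the side condition ``$n\geq 2$'' attached to type~$\mathrm{T}3$ translates into a condition that is satisfied for all $n\geq 1$ here, so that form~$\mathrm{F}3$ may be listed without restriction.
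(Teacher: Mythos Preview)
Your proposal is correct and matches the paper's own approach exactly: the paper derives Theorem~\ref{p4n+1} immediately from the identity $\mathcal{T}_{\delta}^{4n+1}=\mathcal{T}_{\Delta}^{4n+1}$ (Assertion~1 of Corollary~\ref{cor}) together with Proposition~\ref{thmDelta1} applied with parameter $2n$. Your care with the parameter bookkeeping, in particular the observation that the side condition on type~$\mathrm{T}3$ becomes vacuous here, is precisely what is needed.
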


\begin{thm}\label{p4n+2}
 Up to isomorphism,  the tournaments of the class $\mathcal{T}_{\delta}^{4n+2}$, where $n\geq 1$, are the tournaments with one  of the following forms, where $T_{k}$ is  an arbitrary $k$-vertex tournament:
 \begin{enumerate}
 	\item[{\rm F4}.] $T_{2n+1}.\underline{2}$;
 \item[ {\rm F5}.] $C_{3}(\underline{1},\underline{1},T_{2n}.\underline{2})$;
 \item[{\rm F6}.] $\underline{2}(\underline{1}, T_{2n+1}(\underline{1},\underline{2},\ldots,\underline{2}))$ or its dual;
 \item[{\rm F7}.] $\underline{2}(\underline{1}, T_{2n}(C_{3},\underline{2},\ldots,\underline{2}))$ or its dual;
 \item[{\rm F8}.] $ \underline{3}(\underline{1}, T_{2n+1}(\underline{1},\underline{1},\underline{2},\ldots,\underline{2}),\underline{1})$;
  \item[{\rm F9}.] $\underline{3}(\underline{1}, T_{2n}(C_{3},\underline{1},\underline{2},\ldots,\underline{2}),\underline{1})$;
  \item[{\rm F10}.] $\underline{3}(\underline{1},T_{2n-1}(C_{3},C_{3},\underline{2},\ldots,\underline{2}),\underline{1})$ when $n \geq 2$.
 \end{enumerate}
\end{thm}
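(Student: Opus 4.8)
The plan is to reduce the statement, via the second assertion of Corollary~\ref{cor}, to a description of the $(4n+2)$-vertex tournaments $T$ with $\Delta(T)\in\{2n+1,2n+2\}$. Indeed $\Delta(4n+2)=2n+2$ by Theorem~\ref{deltan}, and since $\Delta(T)\le\Delta(4n+2)$ always holds, a tournament $T$ with $v(T)=4n+2$ belongs to $\mathcal{T}_{\delta}^{4n+2}$ precisely when $\Delta(T)\ge 2n+1$. I would then prove the two inclusions separately.

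For the (routine) direction that each of the forms F4--F10 lies in $\mathcal{T}_{\delta}^{4n+2}$, I would check in each case that $v(T)=4n+2$ and exhibit a co-modular decomposition of $T$ of size $2n+1$: the evident twins, together with the three-element $C_{3}$-modules appearing in the formula, and, in the forms headed by $\underline{2}$, $\underline{3}$ or $C_{3}$, the one or two outer singletons. For instance, in F5 $=C_{3}(\underline{1},\underline{1},T_{2n}.\underline{2})$ one uses that $T_{2n}.\underline{2}$, being a module of $T$, witnesses that the pair formed by the two $\underline{1}$-parts of $C_{3}$ is a co-module, so the $2n$ twins of $T_{2n}.\underline{2}$ and that pair form a co-modular decomposition of size $2n+1$. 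By the reduction in the first paragraph this gives $T\in\mathcal{T}_{\delta}^{4n+2}$.

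For the converse, I would start from a $\delta$-decomposition $D$ of $T\in\mathcal{T}_{\delta}^{4n+2}$, so that $|D|=\Delta(T)\in\{2n+1,2n+2\}$ by Corollary~\ref{cor}. The heart of the argument is a counting estimate: from $4n+2=v(T)=\sum_{M\in D}|M|+|V(T)\setminus\bigcup D|$ together with $|{\rm sg}(D)|\le 2$ (Corollary~\ref{rcomod}), the fact that at most one element of $D$ fails to be a module of $T$ (Lemma~\ref{comod part}), and the size restrictions of Observation~\ref{rr2} ($|M|\ne 4$ for a module, $T[M]\simeq C_{3}$ for $|M|=3$), one deduces that every element of $D$ is a twin except for at most two three-element modules inducing $C_{3}$ and, only when $|{\rm sg}(D)|=0$, possibly one non-module pair; in particular an element of size $5$ is impossible, as it would force $\sum_{M\in D}|M|>4n+2$. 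One then splits into cases according to $(|D|,|{\rm sg}(D)|)$ and the number $k\in\{0,1,2\}$ of $C_{3}$-modules in $D_{\ge 2}$. When $|{\rm sg}(D)|\ge 1$, Corollary~\ref{rcomod} makes every element of $D_{\ge 2}$ a nontrivial module, each outer singleton $\{a\}$ yields the module $V(T)\setminus\{a\}$, and intersecting these modules gives a module $N$ partitioned by the twins and $C_{3}$-modules of $D$ and by the vertices outside $\bigcup D$; hence $T[N]$ is a lexicographic sum over an, up to relabelling arbitrary, tournament whose parts are the prescribed $\underline{1}$'s, $\underline{2}$'s and $C_{3}$'s, and $T$ is recovered by reinserting the one or two outer singletons. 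Each such singleton is a source or a sink of the relevant quotient, which yields a $\underline{2}$-top, and its dual, when $|{\rm sg}(D)|=1$ (forms F6, F7) and a $\underline{3}$-top when $|{\rm sg}(D)|=2$, the two singletons then being a source and a sink (forms F8, F9, F10). When $|{\rm sg}(D)|=0$ the count forces all $2n+1$ elements to be twins: if all are modules then $T=T_{2n+1}.\underline{2}$ (F4); if one is a non-module pair $M$, then $V(T)\setminus M=T_{2n}.\underline{2}$ is a module, and the three-vertex quotient obtained by contracting $V(T)\setminus M$ is $C_{3}$, giving $T=C_{3}(\underline{1},\underline{1},T_{2n}.\underline{2})$ (F5), or $\underline{3}$, giving $T=\underline{3}(\underline{1},T_{2n}.\underline{2},\underline{1})$; this last tournament, like every one arising from the case $|D|=2n+2$, is of form F8 after splitting one vertex of the inner tournament into a twin. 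Collecting the cases yields exactly F4--F10.

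I expect the main obstacle to be the bookkeeping of the converse: making the case analysis genuinely exhaustive --- in particular ruling out three-element co-modules not inducing $C_{3}$, four- and five-element co-modules, and non-module co-modules of size at least $3$, all by the counting estimate --- and reading off correctly, in each case, the top tournament ($\underline{2}$, $\underline{3}$ or $C_{3}$) and the arbitrariness of the quotient from the positions of the outer singletons or of the non-module pair. The overlaps between the seven forms, notably that the case $|D|=2n+2$ and the ``$\underline{3}$-quotient'' subcase both fall under F8, must be handled so that the final list is presented as a, possibly non-disjoint, exhaustive union.
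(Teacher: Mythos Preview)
Your proposal is correct and follows essentially the same route as the paper: reduce via Corollary~\ref{cor} to $\Delta(T)\in\{2n+1,2n+2\}$, take a $\delta$-decomposition, and case-split on $|{\rm sg}(D)|$ together with the sizes of the elements of $D_{\ge 2}$, using Corollary~\ref{rcomod}, Lemma~\ref{comod part}, and Observation~\ref{rr2} exactly as the paper does.

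Two small differences are worth noting. First, the paper treats the subcase $|{\rm sg}(D)|=2$ with $\Delta(T)=3$ (i.e.\ $n=1$) separately, by listing the $4$-vertex tournaments via Fact~\ref{tournois4}; your uniform argument avoids this split, which is a mild simplification. Second, in your $|{\rm sg}(D)|=0$ case with a non-module pair $M=\{x,y\}$, the ``$\underline{3}$-quotient'' alternative is in fact vacuous: if the quotient were $\underline{3}$ then $V(T)\setminus M$ would sit in the middle position (the only way for $\{x,y\}$ not to be a twin in $\underline{3}$), making $V(T)\setminus\{x\}$ a module and $\{x\}\subsetneq M$ a co-module, contradicting the minimality of $M$. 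The paper relies on this implicitly and jumps straight to F5; your inclusion of the extra alternative is harmless for the final list (it is absorbed by F8), but you may want to tighten that step.
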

\begin{proof}
Let  $T$ be a tournament with one of the forms $\text{F}4, \ldots, \text{F}11$. Clearly, $v(T) = 4n+2$ and $T$ admits a $\Delta$-decomposition of cardinality $2n+1$ or $2n+2$. Since $\Delta(4n+2) = 2n+2$ by Theorem~\ref{deltan}, then $\Delta(T)=\Delta(4n+2)$ or $\Delta(T)=\Delta(4n+2)-1$. It follows from Assertion~2 of Corollary~\ref{cor} that $\delta(T)=\delta(4n+2)$. Thus  $T \in \mathcal{T}_{\delta}^{4n+2}$.   
 
 Conversely, let $T$ be a tournament of $\mathcal{T}_{\delta}^{4n+2}$.
 By Assertion~2 of Corollary~\ref{cor}, $\Delta(T)=2n+1$ or $2n+2$. If $\Delta(T)=2n+2$, that is, $\Delta(T) = \Delta(4n+2)$ (see Theorem~\ref{deltan}), then $T$ is a tournament of the class $\mathcal{T}_{\Delta}^{4n+2}$. It follows from Proposition~\ref{thmDelta0}, that $T$ is $\underline{3}(\underline{1}, T_{2n}.\underline{2} ,\underline{1})$, which is a particular form of $\text{F}8$. 
 
 Now suppose $\Delta(T)=2n+1$. Let $D$ ba a $\delta$-decomposition of $T$. We have $|D| = 2n+1$. Recall that $|\text{sg}(D)| \leq 2$ (see Corollary~\ref{rcomod}).  
 
 First suppose $\text{sg}(D)=\varnothing$. Since $v(T) =4n+2$ and $|D|=\Delta(T)= 2n+1$, then $\cup D = V(T)$ and the elements of $D$ are of cardinality $2$. Moreover, it follows from Assertion 2 of Lemma~\ref{comod part} that there exists an element $I$ of $D$ such that
 \begin{equation}\label{eqi}
  \vert I\vert =2 \ \text{and} \text{ the elements of }D\setminus \{I\} \text{ are twins of }T. 
 \end{equation}
  If $I$ is a module of $T$, then the elements of $D$ are twins of $T$ (see (\ref{eqi})). Since $\cup D = V(T)$, it follows that $T$ has form $\text{F4}$. If $I$ is not a module of $T$, then since $\overline{I}$ is a module of $T$ and $\cup D = V(T)$, it follows from (\ref{eqi}) that $T$ has form $\text{F5}$.
  
  Second suppose $\text{sg}(D)\neq \varnothing$, that is, $|\text{sg}(D)|=1$ or $2$.  
  By Corollary~\ref{rcomod},
\begin{equation}\label{eqj}
 \text{the elements of }D_{\geq 2}\text{ are nontrivial modules of }T. 
 \end{equation}

 To begin, suppose  $\vert \text{sg}(D)\vert =1$. Since $v(T) = 4n+2$ and $|D| = \Delta(T) = 2n+1$, it follows from (\ref{eqj}) that there exists an element $I$ of $D_{\geq 2}$ such that 
 \begin{equation}\label{eqii}
  \vert I\vert = 2 \ \text{or} \ 3, \ \text{and} \text{ the elements of }D_{\geq 2}\setminus \{I\} \text{ are twins of }T.
  \end{equation}
  By (\ref{eqj}), (\ref{eqii}) and Observation~\ref{rr2}, $I$ is a twin of $T$ or $I$ is a module of $T$ such that $T[I]\simeq C_{3}$. If $I$ is a twin of $T$, then since $v(T) = 4n+2$, $|D| = 2n+1$ and $|\text{sg(D)}|=1$, it follows from (\ref{eqii}) that the elements of $D_{\geq 2}$ are twins of $T$ and that $|\cup D| = v(T)-1$. So $T$ has form $\text{F6}$. If $I$ is a module of $T$ such that $T[I]\simeq C_{3}$, then since $v(T) = 4n+2$, $|D| = 2n+1$ and $|\text{sg(D)}|=1$, it follows from (\ref{eqii}) that $\cup D = V(T)$ and the elements of $D \setminus \{I\}$ are twins of $T$. So $T$ has form $\text{F7}$.
  
   To finish, suppose  $\vert \text{sg}(D) \vert =2$. First suppose $\Delta(T) =3$. In this instance, $n =1$, i.e. $v(T)=6$. Thus, $T$ has form $\underline{3}(\underline{1}, T_4, \underline{1})$, where $T_4$ is a $4$-vertex tournament. Since the $4$-vertex tournaments are $\underline{4}$, $\underline{2}(C_3, \underline{1})$, $\underline{2}( \underline{1}, C_3)$, and $C_3(\underline{1}, \underline{1}, \underline{2})$ (see Fact~\ref{tournois4}), then $\Delta(T) =4$ if $T_4$ is $\underline{4}$, and $\Delta(T) =3$ otherwise.
   Since $\Delta(T)=3$, it follows that $T_4$ is $\underline{2}(C_3, \underline{1})$, $\underline{2}( \underline{1}, C_3)$, or $\underline{3}(\underline{1}, \underline{1}, \underline{2})$. If $T_4$ is $\underline{2}(C_3, \underline{1})$ or $\underline{2}(\underline{1}, C_3)$, then $T$ is $\underline{3}(\underline{1}, \underline{2}(C_3, \underline{1}), \underline{1})$ or its dual, and thus $T$ has form $\text{F}9$. If $T_4$ is $\underline{3}(\underline{1}, \underline{1}, \underline{2})$, then $T$ in $\underline{3}(\underline{1}, C_3(\underline{1}, \underline{1}, \underline{2}), \underline{1})$, and thus $T$ has form $\text{F}8$. 
   
   Second suppose $\Delta(T)\geq 4$. Since $v(T) = 4n+2$ and $|D| = 2n+1$, it follows from (\ref{eqj}) and from the second assertion of Observation~\ref{rr2} that there exist distinct $I,J \in D_{\geq 2}$ such that  
  \begin{equation}\label{eqsd22}
 |I| = 2 \ \text{or} \ 3, \ |J| = 2 \ \text{or} \ 3, \ \text{and} \ \text{ the elements of }D_{\geq 2}\setminus \{I, J\} \text{ are twins of }T.
   \end{equation}
  Since $I,J \in D_{\geq 2}$, then $I$ and $J$ are modules of $T$ (see (\ref{eqj})). Moreover, since $|I|, |J| \in \{2,3\}$, it follows from the first assertion of Observation~\ref{rr2} that by interchanging $I$ and $J$, one of the following three cases holds.
  \begin{itemize}
  	\item $I$ and $J$ are twins of $T$. In this instance, the elements of $D_{\geq 2}$ are twins of $T$ (see (\ref{eqj})). Since $|D| = 2n+1$ and $|\text{sg}(D)|=2$, we have $|D_{\geq 2}| = 2n-1$ and $|\cup D| = v(T)-2 = 4n$. Since the elements of $D_{\geq 2}$ are twins of $T$, it follows that $T$ has form~ $\text{F}8$.
    
    \item $I$ is a twin of $T$ and $J$ is a module of $T$ such that $T[J]\simeq  C_{3}$. Since $|D| = 2n+1$ and $|\text{sg}(D)|=2$, we have $|D_{\geq 2}| = 2n-1$ and $|\cup D| = v(T)-1$. Since the elements of $D_{\geq 2} \setminus \{J\}$ are twins of $T$, we obtain that $T$ has form~$\text{F}9$. 
   
    \item $I$ and $J$ are modules of $T$ such that $T[I]\simeq T[J]\simeq C_{3}$. In this instance, $v(T) \geq 8$, and since $v(T) =4n+2$, we have $n \geq 2$ and $v(T) \geq 10$. Since $|D| = 2n+1$ and $|\text{sg}(D)|=2$, we have $|D_{\geq 2}| = 2n-1$ and $\cup D = V(T)$. Since the elements of $D_{\geq 2} \setminus \{I,J\}$ are twins of $T$, we obtain that $T$ has form~$\text{F}10$ with $n \geq 2$. \qedhere
    \end{itemize}
\end{proof}

\begin{thm}\label{p4n+3}
Up to isomorphism,  the tournaments of the class $\mathcal{T}_{\delta}^{4n+3}$, where $n\geq 1$, are the tournaments with one of the following forms, where $T_{k}$ is  an arbitrary $k$-vertex tournament:
 \begin{enumerate}
 \item[{\rm F11}.] $T_{2n+2}(\underline{1},\underline{2},\ldots,\underline{2})$;
 \item[{\rm F12}.] $T_{2n+1}(C_{3},\underline{2},\ldots,\underline{2})$; 
\item[{\rm F13}.]  $C_{3}(\underline{1},\underline{1}, T_{2n}(C_{3},\underline{2},\ldots,\underline{2}))$; 
\item[{\rm F14}.] $C_{3}(\underline{1},\underline{1}, T_{2n+1}(\underline{1},\underline{2},\ldots,\underline{2}))$; 
\item[{\rm F15}.] $\underline{2}(\underline{1}, T_{2n+2}(\underline{1},\underline{1},\underline{2},\ldots,\underline{2}))$ or its dual;
 \item[{\rm F16}.] $\underline{2}(\underline{1},T_{2n+1}(C_{3},\underline{1},\underline{2},\ldots,\underline{2}))$ or its dual;
\item[{\rm F17}.]   $\underline{2}(\underline{1},T_{2n}(C_{3},C_{3},\underline{2},\ldots,\underline{2}))$ or its dual;
\item[{\rm F18}.]   $\underline{3}(\underline{1},T_{2n-1}(S_5,\underline{2},\ldots,\underline{2}),\underline{1})$, where $S_5=U_{5},V_{5}$ or $W_{5}$;
    \item[{\rm F19}.] $\underline{3}(\underline{1},T_{2n+2}(\underline{1},\underline{1},\underline{1},\underline{2},\ldots,\underline{2}),\underline{1})$; 
    \item[{\rm F20}.]$\underline{3}(\underline{1}, T_{2n}(C_{3},C_{3},\underline{1},\underline{2},\ldots,\underline{2}),\underline{1})$ when $n\geq 2$;  
 \item[{\rm F21}.] $\underline{3}(\underline{1},T_{2n+1}(\underline{1},\underline{1},C_{3},\underline{2},\ldots,\underline{2}),\underline{1})$;
     
 \item[{\rm F22}.] $\underline{3}(\underline{1},T_{2n-1}(C_{3},C_{3},C_{3},\underline{2},\ldots,\underline{2}) ,\underline{1})$ when $n\geq 2$.  
  \end{enumerate}
\end{thm}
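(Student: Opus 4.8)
The plan is to argue as in the proof of Theorem~\ref{p4n+2}. Since $4n+3\equiv 3\pmod 4$ and $\Delta(4n+3)=2n+2$ (Theorem~\ref{deltan}), the second assertion of Corollary~\ref{cor} tells us that a tournament $T$ with $v(T)=4n+3$ lies in $\mathcal{T}_\delta^{4n+3}$ if and only if $\Delta(T)\in\{2n+1,2n+2\}$. For the sufficiency direction I would, for each of the forms $\text{F}11$--$\text{F}22$, exhibit the natural co-modular decomposition: the inner twins together with the inner $C_3$- and $S_5$-blocks (all of which are modules of $T$), plus --- in the forms built over $\underline{2}(\underline{1},\cdot)$ or $\underline{3}(\underline{1},\cdot,\underline{1})$ --- the outer source/sink singletons, and --- in the forms built over $C_3(\underline{1},\underline{1},\cdot)$ --- the pair consisting of the two outer vertices, whose complement is a nontrivial module. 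A count shows this decomposition has size $2n+1$ or $2n+2$, so $2n+1\le \Delta(T)\le \Delta(4n+3)=2n+2$; hence $\delta(T)=\delta(4n+3)$ by the second assertion of Corollary~\ref{cor}, and $T\in\mathcal{T}_\delta^{4n+3}$.

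For the necessity direction, let $T\in\mathcal{T}_\delta^{4n+3}$, so $\Delta(T)=2n+2$ or $2n+1$. If $\Delta(T)=2n+2=\Delta(4n+3)$, then $T\in\mathcal{T}_\Delta^{4n+3}$, and since $4n+3=2(2n+1)+1$, Proposition~\ref{thmDelta1} (applied with $2n+1$ in place of $n$) gives that $T$ has type $\text{T}1$, $\text{T}2$ or $\text{T}3$. A short verification, using only the block manipulations employed elsewhere in the paper (reordering blocks, splitting a $\underline{2}$-block into two $\underline{1}$-blocks, or enlarging the outer quotient tournament), then identifies $\text{T}1$ as a particular $\text{F}11$, $\text{T}2$ as a particular $\text{F}19$, and $\text{T}3$ as a particular $\text{F}21$.

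The main case is $\Delta(T)=2n+1$. Here I would fix a $\delta$-decomposition $D$ of $T$, so $|D|=2n+1$ and each element of $D$ is a minimal co-module. Put $s:=|{\rm sg}(D)|\in\{0,1,2\}$ (Corollary~\ref{rcomod}) and let $r$ be the number of vertices of $T$ not covered by $\cup D$; then $v(T)=4n+3$ forces $\sum_{M\in D_{\geq 2}}(|M|-2)=s+1-r=:E$, so $0\le E\le s+1\le 3$. The case analysis is organized by the pair $(s,E)$, using repeatedly: Observation~\ref{rr2} (which constrains the elements of $D$: size $3$ forces $C_3$, a module element cannot have size $4$, size $5$ forces $U_5$, $V_5$ or $W_5$); Lemma~\ref{comod part}(2) (at most one element of $D$ is not a module of $T$, and if such an element exists then $s=0$, since singletons are always modules); and the standard fact that a partition of a tournament into modules is a congruence. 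When every element of $D$ is a module, adjoining a singleton block for each uncovered vertex exhibits $T$ directly as the claimed lexicographic sum: this produces $\text{F}11$ and $\text{F}12$ when $s=0$, $\text{F}15$, $\text{F}16$ and $\text{F}17$ when $s=1$, and $\text{F}19$, $\text{F}20$, $\text{F}22$, $\text{F}21$ and $\text{F}18$ when $s=2$ (the subcase $E=3$ with one size-$5$ block giving $\text{F}18$), the side conditions $n\ge 2$ in $\text{F}20$ and $\text{F}22$ being forced by nonnegativity of the number of remaining twins, and the impossibility of a size-$4$ module ruling out the other profiles. When $D$ has a (necessarily unique) element $I\in D_{\geq 2}$ that is not a module of $T$ --- so $s=0$ and $\overline{I}$ is a nontrivial module --- the key local observation is that the vertices of $I$ cannot all relate the same way to $\overline{I}$ (otherwise $I$ would be a module), i.e., $I$ is attached to $\overline{I}$ ``in a mixed way''; examining the $3$- or $4$-vertex tournament induced on $I$ after $\overline{I}$ is contracted to a single vertex then yields $T\simeq\underline{3}(\underline{1},T[\overline{I}],\underline{1})$ or $T\simeq C_3(\underline{1},\underline{1},T[\overline{I}])$, and expanding $T[\overline{I}]$ --- itself the lexicographic sum over the modules of $D$ contained in it --- gives $\text{F}19$ or $\text{F}21$ in the first case and $\text{F}13$ or $\text{F}14$ in the second.

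I expect the main obstacle to be precisely this bookkeeping: for each $(s,E)$ one must confirm that the block-size profiles are exactly those listed, handle both the choice of which block is the non-module and both orientations of the mixed attachment, and check that each resulting lexicographic sum --- after possibly reordering blocks, passing to the dual, or splitting a $\underline{2}$-block into two $\underline{1}$-blocks --- is one of $\text{F}11$--$\text{F}22$ within the stated parameter ranges. Making the correspondence exhaustive, so that no form and no boundary case is omitted, is the delicate part; the bound $E=s+1-r\le 3$ is what keeps the number of cases finite and underlies the whole argument.
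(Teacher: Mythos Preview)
Your plan is correct and follows the paper's route: split on $\Delta(T)\in\{2n+1,2n+2\}$ via Corollary~\ref{cor}, dispatch $\Delta(T)=2n+2$ by Proposition~\ref{thmDelta1}, and for $\Delta(T)=2n+1$ run a case analysis on $s=|{\rm sg}(D)|$ using Observation~\ref{rr2} and Lemma~\ref{comod part}. Your parameter $E=s+1-r$ is a convenient repackaging of exactly the same vertex count the paper uses implicitly, and your identifications of the $\Delta$-maximal types with forms on the list (T1 $\to$ F11, T3 $\to$ F21) differ from the paper's (T1 $\to$ F15, T2 and T3 $\to$ F19) only because the listed forms overlap; both are valid.

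The one place that needs more care is the non-module subcase. You assert that contracting $\overline{I}$ and examining the resulting ``$3$- or $4$-vertex tournament'' yields $T\simeq\underline{3}(\underline{1},T[\overline{I}],\underline{1})$ or $T\simeq C_3(\underline{1},\underline{1},T[\overline{I}])$. For $|I|=3$ this cannot be how the argument runs, since neither target form has three exterior vertices; what actually happens (and what the paper proves) is that the decomposable $4$-vertex quotient forces a proper subset of $I$ to be a co-module of $T$, contradicting minimality --- so $|I|=3$ is impossible. And for $|I|=2$, the branch $T\simeq\underline{3}(\underline{1},T[\overline{I}],\underline{1})$ is vacuous by the same minimality reasoning: each outer singleton would then be a co-module strictly contained in $I$. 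Hence only the $C_3(\underline{1},\underline{1},T[\overline{I}])$ branch survives, giving exactly F13 and F14; your F19/F21 outputs from the $\underline{3}$ branch never actually occur in this subcase (though listing them is harmless, since those forms appear elsewhere). With this correction your argument matches the paper's.
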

\begin{proof}
Let  $T$ be a tournament with one of the forms $\text{F}11, \ldots, \text{F}22$. Clearly, $v(T) = 4n+3$ and $T$ admits a $\Delta$-decomposition of cardinality $2n+1$ or $2n+2$. Since $\Delta(4n+3) = 2n+2$ by Theorem~\ref{deltan}, then $\Delta(T)=\Delta(4n+2)$ or $\Delta(T)=\Delta(4n+2)-1$.
It follows from Assertion~2 of Corollary~\ref{cor} that $\delta(T) = \delta(4n+3)$, and hence $T \in \mathcal{T}_{\delta}^{4n+3}$.   

Conversely, let $T$ be a tournament of $\mathcal{T}_{\delta}^{4n+3}$. By Assertion~2 of Corollary~\ref{cor}, we have $\vert D\vert=\Delta(T)=2n+1$ or $2n+2$. To begin, suppose $\Delta(T)=2n+2$, that is, $\Delta(T) = \Delta(4n+3)$ (see Theorem~\ref{deltan}). In this instance, $T$ is a tournament of the class $\mathcal{T}_{\Delta}^{4n+3}$. It follows from Proposition~\ref{thmDelta1}, that $T$ is $\underline{2}(\underline{1},T_{2n+1}.\underline{2})$ or its dual,
 $\underline{3}(\underline{1},T_{2n+1}(\underline{1},\underline{2},\ldots,\underline{2}),\underline{1})$,
or $\underline{3}(\underline{1},T_{2n}(C_{3},\underline{2},\ldots,\underline{2}),\underline{1})$. In the first instance $T$ has a particular form of $\text{F}15$, in the second and third instances, $T$ has a particular form a $\text{F}19$.

Now suppose $\Delta(T)=2n+1$. Let $D$ ba a $\delta$-decomposition of $T$. We have $|D| = 2n+1$. Recall that $|\text{sg}(D)| \leq 2$ (see Corollary~\ref{rcomod}). 
  
First suppose $\text{sg}(D)=\varnothing$. Since $v( T) =4n+3$ and $|D| = 2n+1$, then by Assertion~2 of Lemma~\ref{comod part}, there exist distinct $I,J \in D$ such that 
 \begin{equation}\label{eq3i}
\vert I\vert =2, \ \vert J\vert =2 \text{ or } 3, \text{  and the elements of }D\setminus \{I,J\} \text{ are twins of }T.
  \end{equation}
 We distinguish the following cases.
 \begin{itemize}
 	\item Suppose $|J|=2$. By Assertion~2 of Lemma~\ref{comod part}, and by interchanging $I$ and $J$, we may assume that $I$ is a twin of $T$. By (\ref{eq3i}), the element of $D \setminus \{J\}$ are twins of $T$. Since $|\cup D|=v(T)-1 = 4n+2$, it follows that $T$ has form $\text{F}11$ if $J$ is also a twin of $T$. Moreover, if $J$ is not a module of $T$, then since $\overline{J}$ is a module of $T$ and $\text{sg}(D) = \varnothing$, we obtain that $T$ has form $\text{F}14$. 
 	     
 	\item Suppose $|J|=3$. Suppose for a contradiction that $J$ is not a module of $T$. Since $\overline{J}$ is a module of $T$, there exists a $4$-vertex tournament $Q_4$ such that $T$ is $Q_4(\underline{1}, \underline{1}, \underline{1}, T[\overline{J}])$. Since the $4$-vertex tournaments are decomposable, and since $J$ is not a module of $T$, it follows that there exists a proper subset $K$ of $J$ such that $K$ is a co-module of $T$. This contradicts the minimality of the co-module $J$ of $K$. Thus, $J$ is a module of $T$. On the other hand, $T[J] \simeq C_3$ by Assertion~1 of Observation~\ref{rr2}. Moreover, $\cup D = V(T)$. First, suppose that $I$ is a module of $T$. In this instance, the elements of $D \setminus \{J\}$ are twins of $T$ (see (\ref{eq3i})). Since $J$ is a module of $T$ such that $T[J] \simeq C_3$ and since $\cup D = V(D)$, it follows that $T$ has form $\text{F}12$. Second, suppose that $I$ is not a module of $T$. In this instance, $\overline{I}$ is a module of $T$. Moreover, the elements of $D \setminus \{I,J\}$ are twins of $T$ (see (\ref{eq3i})). Since $J$ is a module of $T$ such that $T[J] \simeq C_3$ and since $\cup D = V(D)$ and $\text{sg}(D) = \varnothing$, it follows that $T$ has form $\text{F}13$.         
 \end{itemize}
  
  Second suppose $\text{sg}(D)\neq \varnothing$, that is, $|\text{sg}(D)|=1$ or $2$.  
  By Corollary~\ref{rcomod},
\begin{equation}\label{eq33}
 \text{the elements of }D_{\geq 2}\text{ are nontrivial modules of }T. 
 \end{equation}

 To begin, suppose  $\vert \text{sg}(D)\vert =1$.
   Since $v(T) = 4n+3$ and $|D| = 2n+1$, it follows from (\ref{eq33}) and from the second assertion of Observation~\ref{rr2} that there exist distinct $I,J \in D_{\geq 2}$ such that  
  \begin{equation}\label{eqsd33}
 |I| = 2 \ \text{or} \ 3, \ |J| = 2 \ \text{or} \ 3, \ \text{and} \ \text{ the elements of }D_{\geq 2}\setminus \{I, J\} \text{ are twins of }T.
   \end{equation}
   Since $I,J \in D_{\geq 2}$, then $I$ and $J$ are modules of $T$ (see (\ref{eq33})). Moreover, since $|I|, |J| \in \{2,3\}$, it follows from the first assertion of Observation~\ref{rr2} that by interchanging $I$ and $J$, we may assume that $I$ and $J$ are twins of $T$, $I$ is  a twin of $T$  and $T[J]\simeq  C_{3}$, or  $T[I]\simeq T[J]\simeq C_{3}$.
   \begin{itemize}
    \item Suppose that $I$ and $J$ are twins of $T$. In this instance, the elements of $D_{\geq 2}$ are twins of $T$ (see (\ref{eq33})). Since $|D| = 2n+1$ and $|\text{sg}(D)|=1$, we have $|D_{\geq 2}| = 2n$ and $|\cup D| = v(T)-2 = 4n+1$. Since the elements of $D_{\geq 2}$ are twins of $T$, it follows that $T$ has form~$\text{F}15$. 
    
    \item Suppose that $I$ is a twin of $T$ and $J$ is a module of $T$ such that $T[J]\simeq  C_{3}$. Since $|D| = 2n+1$ and $|\text{sg}(D)|=1$, we have $|D_{\geq 2}| = 2n$ and $|\cup D| = v(T)-1 =4n+2$. Since the elements of $D_{\geq 2} \setminus \{J\}$ are twins of $T$, we obtain that $T$ has form~ $\text{F}16$. 
    \item Suppose that $I$ and $J$ are modules of $T$ such that $T[I]\simeq T[J]\simeq C_{3}$. Since $|D| = 2n+1$ and $|\text{sg}(D)|=1$, we have $|D_{\geq 2}| = 2n$, and $\cup D = V(T)$. Since the elements of $D_{\geq 2} \setminus \{I,J\}$ are twins of $T$, we obtain that $T$ has form~$\text{F}17$.
    \end{itemize}
     
 To finish, suppose $\vert \text{sg}(D)\vert =2$. 
 First suppose $\Delta(T)=3$ or $4$. In this instance, $n=1$, i.e., $v(T)=7$. Thus $T = \underline{3}(\underline{1}, X_5, \underline{1})$, where $X_5$ is a $5$-vertex tournament. Recall that $|D|= \Delta(T) = 2n+1 =3$. If $X_5$ is $\underline{5}$, $\underline{3}(\underline{1}, \underline{1}, C_3)$, or $\underline{3}(C_3, \underline{1}, \underline{1})$, then $\Delta(T)=4$ (e.g. see Proposition~\ref{thmDelta1}). Since $\Delta(T)=3$, it follows from Fact~\ref{tournois5} that $X_5$ is 
  $\underline{3}(\underline{1}, C_3,\underline{1})$, $\underline{2}(\underline{1}, C_4)$, $\underline{2}(C_4, \underline{1})$, $C_3(\underline{1}, \underline{2}, \underline{2})$, $C_3(\underline{1}, \underline{1}, C_3)$, $C_3(\underline{1}, \underline{1}, \underline{3})$, $U_5$, $V_5$, and $W_5$ (see Fact~\ref{tournois5}). If $X_5$ is $U_5$, $V_5$, or $W_5$, then $T$ has form $\text{F}18$. If $X_5$ is $\underline{2}(\underline{1}, C_4)$, $\underline{2}(C_4, \underline{1})$, $C_3(\underline{1}, \underline{2}, \underline{2})$, or $C_3(\underline{1}, \underline{1}, \underline{3})$, then $T$ has form $\text{F}19$. If $X_5$ is $\underline{3}(\underline{1}, C_3,\underline{1})$, or $C_3(\underline{1}, \underline{1}, C_3)$, then $T$ has form $\text{F}21$.
       
 Second suppose $\Delta(T)\geq 5$. In this instance, $n \geq 2$ and hence $v(T) \geq 11$. To begin, suppose there exists $M \in D_{\geq 2}$ such that $|M| \geq 4$. By Assertion~2 of Observation~\ref{rr2}, we have $|M| \geq 5$. Since $|D_{\geq 2}| = 2n-1$ and $|\cup (D_{\geq 2} \setminus \{M\})| \leq 4n-4$, we have $|\cup (D_{\geq 2} \setminus \{M\})| = 4n-4$ and $|M|=5$. Hence $\cup D = V(T)$. Moreover, the element of $D_{\geq 2} \setminus \{M\}$ are twins of $T$ (see (\ref{eq33})). On the other hand, by Assertion~3 of Observation~\ref{rr2}, $T[M]$ is isomorphic to $U_5$, $V_5$, or $W_5$. Thus, $T$ has form $\text{F}18$. 
 
 Now, suppose that every element of $D_{\geq 2}$ has cardinality at most $3$. Since 
 \begin{equation} \label{donne}
 v(T)=4n+3, \  |\text{sg}(D)|=2, \ \text{and} \ \vert D_{\geq 2}\vert = 2n-1,
 \end{equation}
 $D_{\geq 2}$ admits at most three elements of cardinality $3$. It follows from (\ref{eq33}) that there exist pairwise distinct $I,J,K \in D_{\geq 2}$ such that  
  \begin{equation}\label{eqsd4}
 |I|, |J|, |K| \in \{2,3\}, \ \text{and} \ \text{ the elements of }D_{\geq 2}\setminus \{I, J, K\} \text{ are twins of }T.
   \end{equation}
    Since $I,J,K \in D_{\geq 2}$, then $I,J$ and $K$ are modules of $T$ (see (\ref{eq33})). Moreover, if for $N \in \{I,J,K\}$, we have $|N|=3$, then $T[N] \simeq C_3$ by Assertion~1 of Observation~\ref{rr2}. Thus, 
    by interchanging $I$, $J$ and $K$, we only have to distinguish the following four cases.
    \begin{itemize}
    	\item Suppose that $I$, $J$ and $K$ are twins of $T$. In this instance, since the elements of $D_{\geq 2}$ are twins of $T$ (see (\ref{eqsd4})), it follows from (\ref{donne}) that $T$ has form~ $\text{F}19$.
    	
    	\item Suppose that $I$ and $J$ are twins of $T$, and that $K$ is a module of $T$ such that $T[K] \simeq C_3$. Since the elements of $D_{\geq 2} \setminus\{K\}$ are twins of $T$ (see (\ref{eqsd4})), it follows from (\ref{donne}) that $T$ has form $\text{F}21$.
    	
    \item Suppose that $I$ is a twin of $T$, and that $J$ and $K$ are modules of $T$ such that $T[J] \simeq T[K] \simeq C_3$. Since the elements of $D_{\geq 2} \setminus \{J,K\}$ are twins of $T$ (see (\ref{eqsd4})), it follows from (\ref{donne}) that $T$ has form $\text{F}20$ with $n \geq 2$.
    	
     \item Suppose that $I$, $J$ and $K$ are modules of $T$ such that $T[I] \simeq T[J] \simeq T[K] \simeq C_3$. Since the elements of $D_{\geq 2} \setminus \{I,J,K\}$ are twins of $T$ (see (\ref{eqsd4})), it follows from (\ref{donne}) that $T$ has form $\text{F}22$ with $n \geq 2$. 
    	 \qedhere
    \end{itemize} 
  \end{proof}   
 

{}

\begin{thebibliography}{}

\bibitem{Index} H. Belkhechine, Decomposability index of tournaments, Discrete Math. 340 (2017) 2986--2994.
 
\bibitem{Index2}  H. Belkhechine, C. Ben Salha, Decomposability and co-modular indices of tournaments, Discrete Math. 344 (2021) 112272.

\bibitem{BB} H. Belkhechine, I. Boudabbous, Indecomposable tournaments and their inde-
composable subtournaments on 5 and 7 vertices, Arcs Combin. 108 (2013) 493--504.

\bibitem{BR} E. Brown, K.B. Reid, Doubly regular tournaments are equivalent to skew Hadamard matrices, J. Combin. Theory Ser. A. 12 (1972) 332--338.

\bibitem{E} A. Ehrenfeucht, G. Rozenberg, Primitivity is hereditary for \mbox{2-structures}, Theoret. Comput. Sci. 70 (1990) 343--358.


\bibitem{I} P. Ille, Indecomposable graphs, Discrete Math. 173 (1997) 71--78.

\bibitem{K} S.J. Kirkland, A reversal index for tournament matrices, Linear Multilinear Algebra. 34 (1993) 343--351. 

\bibitem{MP} V. M\"{u}ller, J. Pelant, On strongly homogeneous tournaments, Czech Math J. 24 (1974) 378--391.

\bibitem{ST} J.H. Schmerl, W.T. Trotter, Critically indecomposable partially ordered sets, graphs, tournaments and other binary
relational structures, Discrete Math. 113 (1993) 191--205.

\bibitem{Spinrad} J. Spinrad, P4-trees and substitution decomposition, Discrete Appl. Math. 39 (1992) 263--291.

\end{thebibliography}
\end{document}